\documentclass{amsart}

\usepackage[left=4cm, right=4cm]{geometry}

\usepackage{amsmath, amssymb, amsthm}
\usepackage[utf8]{inputenc}
\usepackage{url}
\usepackage{tikz}
\usepackage{stmaryrd}
\usepackage{MnSymbol}
\usepackage{paralist}
\usepackage[all]{xy}

\usepackage{hyperref}

\usepackage{comment}


\usepackage[initials]{amsrefs}

\theoremstyle{theorem}
\newtheorem{thm}{Theorem}[section]

\newtheorem{proposition}[thm]{Proposition}
\newtheorem{cor}[thm]{Corollary}

\newtheorem{lemma}[thm]{Lemma}


\theoremstyle{definition}

\newtheorem{definition}[thm]{Definition}

\newtheorem{example}[thm]{Example}
\newtheorem{rem}[thm]{Remark}


\theoremstyle{remark}


\newcommand{\art}[1]{\mathbf{Art}_{#1}}
\newcommand{\comp}[1]{\mathbf{Comp}_{#1}}
\newcommand{\sets}{\mathbf{Set}}
\newcommand{\Def}[1]{\mathrm{Def}_{#1}}
\newcommand{\LD}[1]{\mathrm{LD}_{#1}}

\newcommand{\phiv}{\varphi}


\newcommand{\TW}{\mathrm{TW}}

\newcommand{\NN}{\mathbb{N}}
\newcommand{\ZZ}{\mathbb{Z}}
\newcommand{\CC}{\mathbb{C}}

\newcommand{\RR}{\mathbb{R}}

\newcommand{\HH}{\mathbb{H}}


\newcommand{\U}{\mathcal{U}}

\newcommand{\F}{\mathcal{F}}
\newcommand{\G}{\mathcal{G}}

\newcommand{\cO}{\mathcal{O}}

\newcommand{\C}{\mathcal{C}}

\newcommand{\Y}{\mathcal{Y}}

\newcommand{\m}{\mathfrak{m}}
\DeclareMathOperator{\Spec}{Spec}

\newcommand{\sHom}[3]{\mathrm{Hom}_{#1}(#2 , #3)}




\newcommand{\pow}[1]{\llbracket  #1 \rrbracket}

\renewcommand{\setminus}{\smallsetminus}

\newcommand{\into}{\hookrightarrow} 
\newcommand{\onto}{\twoheadrightarrow} 

\newcommand{\cExt}{\mathcal{E}xt}


\newcommand{\HHom}{\mathrm{Hom}} 











\newcommand{\frakm}{\mathfrak{m}}




\title{The logarithmic Bogomolov--Tian--Todorov theorem}

\author{Simon Felten}
\address{Institut des Hautes \'Etudes Scientifiques, 35, route de Chartres, 91440 -- Bures-sur-Yvette, France}
\email{felten@ihes.fr; felten.math@posteo.net}

\author{Andrea Petracci}
\address{Dipartimento di Matematica, Universit\`a di Bologna, Piazza di Porta San Donato 5, 40126 Bologna, Italy}
\email{a.petracci@unibo.it}

\begin{document}

\begin{abstract}
We prove that the log smooth deformations of a proper log smooth saturated log Calabi--Yau space are unobstructed.
\end{abstract}

\maketitle

\section{Introduction}

The Bogomolov--Tian--Todorov (BTT) theorem \cite{bogomolov, tian, todorov} states that deformations of cohomologically K\"ahler manifolds with trivial canonical bundle are unobstructed.
This was later proved with algebraic methods ($T^1$-lifting criterion): If $k$ is a field of characteristic $0$ and $X$ is a proper smooth $k$-variety  with torsion canonical bundle, then the deformations of $X$ are unobstructed \cite{ran, kawamata, fantechi_manetti}. This means that the functor
\[
\Def{X} : \art{k} \longrightarrow \sets
\]
which parametrizes the deformations of $X \to \Spec k$ is smooth. We refer the reader to \S\ref{sec:deformation_functors} for the definition of smoothness of a functor.
Here $\art{k}$ is the category of Artinian local $k$-algebras with residue field $k$, and $\sets$ is the category of sets.
A crucial ingredient in these proofs is that the Hodge--de Rham spectral sequence degenerates.

In this paper, we generalize the BTT theorem to the context of logarithmic schemes.
The setting is as follows.
Fix a field $k$ of characteristic $0$ and a sharp toric monoid $Q$.
Consider the monoid $k$-algebra $k[Q]$ and its completion $k \pow{Q}$ at its unique monomial maximal ideal, i.e., the ideal generated by $Q \setminus \{0\}$.
Consider the category $\art{k \pow{Q}}$ of Artinian local $k \pow{Q}$-algebras with residue field $k$.
In particular, $k \pow{Q}$ (resp.\ $k$) is the initial (resp.\ terminal) object of $\art{k \pow{Q}}$.

Every object $A$ in $\art{k \pow{Q}}$ gives rise to a log scheme $\Spec (Q \to A)$; this is the log scheme with underlying scheme $\Spec A$ and with log structure associated to the monoid homomorphism $Q \to A$ induced by the structure ring homomorphism $k \pow{Q} \to A$.

We denote by $S_0$ the log scheme $\Spec (Q \to k)$ induced by $k$. Hence $S_0$ is the log scheme with underlying scheme $\Spec k$ and with log structure $Q \oplus k^* \to k$ given by
\[
(q,a) \mapsto \begin{cases}
a \qquad \text{if } q = 0, \\
0 \qquad \text{if } q \neq 0.
\end{cases}
\]

Now we fix a log smooth saturated morphism $f_0 : X_0 \to S_0$ of log schemes.
F.~Kato has defined in \cite{Kato1996} the functor of the log smooth deformations of $f_0$, i.e., the functor
\[
\LD{X_0 / S_0} : \art{k \pow{Q}} \longrightarrow \sets
\]
which to every object $A$ in $\art{k\pow{Q}}$ associates the set of isomorphism classes of log smooth deformations $f: X \to \Spec (Q \to A)$ of $f_0: X_0 \to S_0$.

In the logarithmic setting, the Calabi--Yau condition, which appears in the BTT theorem, is expressed by triviality of the log canonical bundle $\omega_{X_0/S_0} := \Omega^d_{X_0/S_0} := \bigwedge^d\Omega^1_{X_0/S_0}$; here $d$ is the relative dimension of $f_0: X_0 \to S_0$.

Our main result is:

\begin{thm}\label{log-BTT}
Let $k$ be a field of characteristic $0$, let $Q$ be a sharp toric monoid, and let
 $S_0 = \Spec(Q \to k)$ be the log scheme with underlying scheme $\Spec k$ and ghost sheaf $Q$.
Let $f_0: X_0 \to S_0$ be a proper log smooth saturated morphism  of relative dimension $d$. If $\omega_{X_0/S_0}$ is the trivial line bundle, then the log smooth deformation functor
\[
\LD{X_0 / S_0} : \art{k \pow{Q}} \longrightarrow \sets
\]
 is smooth.
\end{thm}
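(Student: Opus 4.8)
The strategy is to establish a logarithmic analogue of the algebraic proof of BTT via the $T^1$-lifting criterion, following the approach of Fantechi--Manetti and Kawamata--Ran, but transported to F.\ Kato's category of log smooth deformations. The plan is to:

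\begin{enumerate}
\item[(1)] Identify the log smooth deformation functor $\LD{X_0/S_0}$ with the deformation functor $\Def{L}$ associated to a differential graded Lie algebra (DGLA) $L$. The natural candidate is the DGLA of polyvector fields, i.e., $L = \bigoplus_i \Gamma(X_0, \wedge^{i+1} \Theta_{X_0/S_0})[-i]$ with the Schouten bracket and zero differential, or more robustly the Kodaira--Spencer DGLA built from a log smooth (or Čech) resolution computing $\rH^\bullet(X_0, \wedge^\bullet \Theta_{X_0/S_0})$. Here $\Theta_{X_0/S_0} = \csHom{\shO_{X_0}}{\Omega^1_{X_0/S_0}}{\shO_{X_0}}$ is the log tangent sheaf, which is locally free since $f_0$ is log smooth. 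The key input at this stage is Kato's description of first-order log smooth deformations and obstructions in terms of $\rH^1$ and $\rH^2$ of the log tangent sheaf, together with the fact that, over the base $\art{k\pow{Q}}$, the relevant cohomology theory is that of the sheaf $\Theta_{X_0/S_0}$ on the underlying scheme.

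\item[(2)] Use the Calabi--Yau hypothesis to produce a quasi-isomorphism of complexes
\[
\bigl(\wedge^\bullet \Theta_{X_0/S_0}, 0\bigr) \xrightarrow{\ \lrcorner\, \omega\ } \bigl(\Omega^{d-\bullet}_{X_0/S_0}, 0\bigr),
\]
contracting with a nowhere-vanishing section $\omega$ of $\omega_{X_0/S_0} = \Omega^d_{X_0/S_0}$. This identifies the graded vector space underlying $L$ with $\bigoplus_p \rH^\bullet(X_0, \Omega^p_{X_0/S_0})$ and, crucially, transports the de Rham differential $d_{\mathrm{dR}}$ on $\Omega^\bullet_{X_0/S_0}$ to an operator $\Delta$ on polyvector fields with $L = \bigoplus \Gamma(\wedge^\bullet\Theta)$, such that $(L, [\cdot,\cdot], \Delta)$ is a differential Gerstenhaber--Batalin--Vilkovisky (BV) algebra: $\Delta^2 = 0$ and the bracket is generated by $\Delta$. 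This is the logarithmic Tian--Todorov lemma.

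\item[(3)] Invoke the degeneration of the logarithmic Hodge--de Rham spectral sequence for the proper log smooth morphism $f_0 : X_0 \to S_0$ in characteristic zero. This is where the properness and saturatedness enter decisively; I would cite the logarithmic Hodge theory results (e.g.\ of Kato--Nakayama, Illusie--Kato--Nakayama, or Steenbrink-type arguments, as adapted to this saturated setting). Degeneration gives the $\partial\bar\partial$-type lemma needed to run the standard Goldman--Millson/Fantechi--Manetti argument: the identity $\Delta \circ d_{\mathrm{dR}}$-formality, i.e., the DGLA $L$ with differential $\Delta$ and the abelian DGLA $\rH^\bullet(L)$ become homotopy equivalent after the choice of $\omega$, which forces $\Def{L}$ to be smooth because obstructions live in $\rH^2(L)$ but every obstruction class vanishes by the BV-formality.

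\item[(4)] Conclude: smoothness of $\Def{L}$ translates back, via the identification of step (1), to smoothness of $\LD{X_0/S_0}$, which is precisely the statement of Theorem~\ref{log-BTT}.
\end{enumerate}

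The main obstacle I anticipate is step (3): making the degeneration of the logarithmic Hodge--de Rham spectral sequence available in exactly the generality needed here — a proper log smooth \emph{saturated} morphism over the \emph{non-reduced, non-trivial} base $S_0 = \Spec(Q \to k)$ (whose log structure is far from being that of a point in the classical sense), with coefficients in the relative log de Rham complex $\Omega^\bullet_{X_0/S_0}$. Classical logarithmic Hodge theory is usually stated for log smooth morphisms to a log point with \emph{fine} (not necessarily saturated) structure, or over $\CC$ via the Kato--Nakayama space; one must check that saturatedness is the right hypothesis guaranteeing $E_1$-degeneration, presumably by reducing to the case $k = \CC$ via Lefschetz-principle/base-change arguments and then using the properness of the Kato--Nakayama space together with a weight argument, or alternatively by a characteristic-$p$ Deligne--Illusie-style argument lifted to the log setting. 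A secondary technical point is step (1): F.\ Kato's deformation functor is defined via log smooth charts, and one must verify that its tangent and obstruction spaces genuinely coincide with sheaf cohomology of $\Theta_{X_0/S_0}$ on the underlying scheme — in particular that no extra contributions come from the monoid $Q$ or from automorphisms of log structures — so that the DGLA $L$ faithfully controls $\LD{X_0/S_0}$. Once these two inputs are secured, the remaining homological algebra (the formality argument and the $T^1$-lifting conclusion) is standard.
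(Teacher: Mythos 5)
Your plan correctly captures the case $Q = 0$: there the paper likewise identifies the (restricted) deformation functor with the deformation functor of a $k$-dgla built from a Thom--Whitney resolution of the log tangent sheaf, uses contraction with a volume form, and invokes $E_1$-degeneration of the log Hodge--de Rham spectral sequence to prove homotopy abelianity (via Iacono's abstract BTT theorem with Cartan homotopies, rather than the BV-operator form of the Tian--Todorov lemma, but these routes are interchangeable here). The degeneration input you worry about in step (3) is available in exactly this generality (Illusie; Felten--Filip--Ruddat), so that is not the real obstacle.

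The genuine gap is that for $Q \neq 0$ the statement is smoothness of $\LD{X_0/S_0}$ as a functor on $\art{k\pow{Q}}$, where $k\pow{Q}$ is a non-trivial complete local ring; this is strictly stronger than smoothness of the restriction to $\art{k}$, since it includes producing deformations over $k\pow{Q}/\frakm^{\ell}$, i.e., smoothings in the $Q$-directions. Your step (1) silently assumes $\LD{X_0/S_0} \cong \Def{L}$ for a $k$-dgla $L$, but a $k$-dgla only controls a functor on $\art{k}$; over $k\pow{Q}$ the controlling object is merely a \emph{pre}-dgla (with $d^2 = [\ell,-]$ for an obstruction element $\ell$), so the formality/homotopy-abelianity argument yields only the restricted statement $\LD{X_0/S_0}\vert_{\art{k}}$ smooth (the paper's Theorem~\ref{hom-ab-naive}). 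To get the full theorem the paper adds two ingredients your plan lacks: (i) surjectivity of $\LD{X_0/S_0}(k\pow{Q}/\frakm^{\ell+1}) \to \LD{X_0/S_0}(k\pow{Q}/\frakm^{\ell})$ for all $\ell$, supplied by the Chan--Leung--Ma / Felten--Filip--Ruddat smoothing machinery (this is where the volume form and Hodge--de Rham degeneration actually do the work when $Q \neq 0$); and (ii) a purely algebraic argument that lifting along this single tower forces the hull to be a power series ring over $\Lambda = k\pow{Q}$. Point (ii) is false for a general complete local base (the $T^1$-lifting formalism you cite proves smoothness over a field, not over $\Lambda$) and is established in the paper by constructing a non-trivial valuation on $k\pow{Q}$ compatible with the $\frakm$-adic topology, from an interior lattice point of the dual cone, and showing that a power series over such a ring vanishing at all tuples of elements of $\frakm$ must be zero. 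Without (i) and (ii) your argument proves only the $Q=0$ case and the restricted statement over $\art{k}$.
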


The proof, which is presented in \S\ref{sec:proof}, is divided in two cases.
 If $Q = 0$, we adapt the formalism of differential graded Lie algebras, which was used by Iacono and Manetti to give an algebraic proof of the BTT theorem \cite{AbstractBTT2017, iacono_pairs, AlgebraicBTT2010}, to the context of log schemes.
 If $Q \neq 0$, we use the recent formalism developed by Chan--Leung--Ma \cite{ChanLeungMa2019} (see also \cite{chan_ma}) and by Felten--Filip--Ruddat \cite{FFR2019} to construct smoothings of degenerate Calabi--Yau varieties, and we apply an algebraic result (Proposition~\ref{prop:lifting_functors_imply_smoothness}).

\subsection{Applications}

We explain two applications of the log BTT theorem.

\subsubsection*{Log Calabi--Yau pairs}

Let $X$ be a smooth proper variety over a field $k$ of characteristic $0$ and let $D$ be an snc effective divisor on $X$.
Let $X_0$ be the log scheme given by $X$ equipped with the divisorial log structure associated to $D$.
Let $S_0$ be the log scheme given by $\Spec k$ with the trivial log structure.
Then $X_0 \to S_0$ is log smooth and saturated.
One has $\omega_{X_0/S_0} = \omega_X(D)$. Therefore Theorem~\ref{log-BTT} applies when $D$ is an anticanonical divisor, i.e., the pair $(X,D)$ is log Calabi--Yau. If this is the case, then log smooth deformations of $X_0 \to S_0$ are unobstructed.

Log smooth deformations of $X_0 \to S_0$ are exactly \emph{locally trivial} deformations of the pair $(X,D)$,
i.e., of the closed embedding $D \into X$.
The log BTT theorem implies: If $D$ is an snc effective anticanonical divisor on $X$, then the functor of locally trivial deformations of $(X,D)$ is smooth.

If $D$ is a smooth divisor, then every deformation of $(X,D)$ is locally trivial.
Therefore, if $D$ is a smooth anticanonical divisor on $X$, then deformations of the pair $(X,D)$ are unobstructed:
This recovers a result by Iacono~\cite{iacono_pairs}, Sano~\cite{sano},
and Katzarkov--Kontsevich--Pantev~\cite{KKP_Hodge_aspects}
(see also \cite{LiuRaoWan2019, Wan, Kontsevich_generalized}).

If $D$ is a non-smooth snc divisor on $D$, then there might be deformations of the pair $(X,D)$ which are not locally trivial
and hence not covered by our BTT theorem. By \cite{felten_petracci_robins}, there are---not only in characteristic $0$ but over every field $k$ with $\mathrm{char}(k) \not= 2$---indeed pairs $(X,D)$ of a smooth projective variety $X$ and an snc effective anticanonical divisor $D \subset X$ such that (not necessarily locally trivial) deformations of $(X,D)$ are obstructed.

\subsubsection*{Simple normal crossing schemes}

Fix an algebraically closed field $k$.
A \emph{normal crossing scheme} over $k$ is a scheme $X$ of finite type over $k$ such that
 every closed point $x \in X$ has an \'etale neighborhood $x \to U \to X$
which admits an \'etale map
\[U \to \Spec k[x_1, \dots,x_m]/(x_1 \cdots x_r), \]
for some $0 \leq r \leq m$, such that $x$ maps to the origin.
A normal crossing scheme $X$ is called \emph{d-semistable} if the coherent sheaf $\cExt^1_{\cO_X}(\Omega_X, \cO_X)$ is isomorphic to the structure sheaf of the singular locus of $X$ (see \cite{friedman_annals} and \cite{abramovich_et_al_log_moduli}).

If  $X$ is a d-semistable normal crossing scheme of pure dimension $d$, then $X$ has a log smooth log structure $X_0$ over $S_0 = \Spec(\NN \to k)$ such that the sheaf $\omega_{X_0/S_0}$ of log $d$-differentials is isomorphic to the dualizing sheaf $\omega_X$.
From Theorem~\ref{log-BTT} we deduce:

\begin{cor} \label{cor:d-semistable}
	Let $X$ be a d-semistable normal crossing scheme proper over an algebraically closed field of characteristic $0$ such that $\omega_X \simeq \cO_X$.
	Then the functor $\LD{X_0/S_0}$ is smooth.
\end{cor}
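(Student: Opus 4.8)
The plan is to deduce Corollary~\ref{cor:d-semistable} from Theorem~\ref{log-BTT} applied with $Q = \NN$; the only non-formal ingredient is the translation between d-semistability and the existence of a suitable log structure, which is already recorded (after Friedman~\cite{friedman_annals}, Kawamata--Namikawa, and F.~Kato~\cite{Kato1996}; see also \cite{abramovich_et_al_log_moduli}) in the paragraph preceding the statement. First I would reduce to the case that $X$ is connected. A normal crossing scheme is \'etale-locally isomorphic to a union of coordinate hyperplanes in $\Spec k[x_1,\dots,x_m]/(x_1\cdots x_r)$, so any two irreducible components of $X$ meeting at a point have the same dimension; since the dual graph of a connected $X$ is connected, such an $X$ has pure dimension. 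Moreover d-semistability, the dualizing sheaf $\omega_X$, and the log smooth deformation functor all decompose over the finitely many connected components of $X$, and a finite product of deformation functors is smooth if and only if each factor is; so we may assume $X$ is connected, of pure dimension $d$ for some $d$.

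Under this reduction, the recalled fact provides a log structure $\shM_{X_0}$ on $X$ and a log smooth morphism $f_0 : X_0 \to S_0 = \Spec(\NN \to k)$ with $\omega_{X_0/S_0} = \Omega^d_{X_0/S_0} \simeq \omega_X$; by hypothesis $\omega_X \simeq \cO_X$, so $\omega_{X_0/S_0}$ is the trivial line bundle. It remains to verify the remaining hypotheses of Theorem~\ref{log-BTT}. Properness of $f_0$ is immediate from properness of the underlying scheme $X$ over $k$. For saturatedness, one uses that $f_0$ is \'etale-locally the standard semistable log smooth morphism modelled on the monoid homomorphism $\NN \to \NN^r$, $1 \mapsto (1,\dots,1)$, over $\Spec k[x_1,\dots,x_m]/(x_1\cdots x_r)$ with its toric log structure; this homomorphism is injective, integral and saturated, so $f_0$ is saturated. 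Finally $Q = \NN$ is a sharp toric monoid. Hence Theorem~\ref{log-BTT} applies and yields that
\[
\LD{X_0/S_0} : \art{k\pow{\NN}} \longrightarrow \sets
\]
is smooth (here $k\pow{\NN} \cong k\pow{t}$), which is the assertion of Corollary~\ref{cor:d-semistable}.

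I expect all the substance to lie in Theorem~\ref{log-BTT} itself; the only thing that requires care for the corollary is bookkeeping. Specifically, one must (i) pin down with precise references the equivalence between d-semistability and the existence of a \emph{global} log smooth structure over the standard log point $S_0$, together with the identification $\omega_{X_0/S_0}\simeq\omega_X$ of the relative log dualizing sheaf (and not, say, a twist of $\omega_X$ along the singular locus); (ii) confirm that this log structure is genuinely \emph{saturated}, not merely fine, via its standard semistable \'etale-local model; and (iii) check that the reduction to connected, equidimensional $X$ loses nothing. None of these points is deep, but each must be stated correctly for the invocation of Theorem~\ref{log-BTT} to be legitimate.
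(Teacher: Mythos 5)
Your proposal is correct and follows essentially the same route as the paper, which deduces the corollary directly from Theorem~\ref{log-BTT} via the standard fact that a d-semistable normal crossing scheme carries a log smooth structure over $S_0=\Spec(\NN\to k)$ with $\omega_{X_0/S_0}\simeq\omega_X$. Your extra bookkeeping (saturatedness via the local model $\NN\to\NN^r$, $1\mapsto(1,\dots,1)$, and the reduction to connected, hence equidimensional, $X$) is sound and merely makes explicit what the paper leaves implicit.
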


This removes the assumptions $H^{d-1}(X, \cO_X) = 0$ and $H^{d-2}(\tilde{X}, \cO_{\tilde{X}}) = 0$, where $d$ is the dimension of $X$ and $\tilde{X} \to X$ its normalization, from \cite[Theorem~4.2]{kawamata_namikawa}.
We remind the reader that, under the assumptions of Corollary~\ref{cor:d-semistable}, $X$ is formally smoothable by \cite{FFR2019, ChanLeungMa2019}.

\subsection{Generalizations}

In Theorem~\ref{log-BTT}, we assume $f_0: X_0 \to S_0$ log smooth. However, in many geometric situations which arise from the degeneration of varieties, the degenerate (log) space $f_0: X_0 \to S_0$ has log singularities. The deformation theory of a general log toroidal family in the sense of \cite{FFR2019} is not yet well-understood, but the special cases which arise in the Gross--Siebert program (\cite{GrossSiebertI,GrossSiebertII,Gross2011}) are. For example, according to \cite{RuddatSiebert2019}, if $f_0: X_0 \to S_0 = \Spec (\NN \to \CC)$ is a simple toric log Calabi--Yau space, then the functor $\mathcal{D}: \art{\CC\pow{t}} \to \sets$ of divisorial deformations has a hull. Moreover, the theory in \cite{FFR2019} shows that $\mathcal{D}$ satisfies Theorem~\ref{CLM-lifting} below; thus, following our proof of Theorem~\ref{log-BTT} above, the functor $\mathcal{D}$ of divisorial deformations is unobstructed. However, at least when $f_0: X_0 \to S_0$ carries a polarization, this is not a new result. Namely, $f_0: X_0 \to S_0$ is a fiber in a whole family $f: X \to \Spec (A)$ of toric log Calabi--Yau spaces over an algebraic torus $\Spec(A)$ according to \cite{GrossSiebertII}. An application of the Gross--Siebert algorithm in \cite{GHS2019}---here we use the polarization---then yields a canonical formal family $\mathfrak{X} \to \mathrm{Spf}(A\pow{t})$, where $\mathrm{Spf}(A\pow{t})$ is the completion of $\Spec (A) \subseteq \Spec(A[t])$. According to \cite{RuddatSiebert2019}, there is an analytic family $\Y \to U \times \mathbb{D}$ where $U \subseteq \Spec (A)_{an}$ is a neighborhood of the point $a \in \Spec(A)_{an}$ which corresponds to the space $f_0: X_0 \to S_0$, and $\mathbb{D}$ is a small disk; when we complete it in $(a,0) \in U \times \mathbb{D}$, the completion is isomorphic to the completion of the canonical formal family $\mathfrak{X} \to \mathrm{Spf}(A\pow{t})$ in $a \in \mathrm{Spf}(A\pow{t})$. According to \cite{RuddatSiebert2019} as well, this completion of the canonical formal family is a versal family for the divisorial deformation functor $\mathcal{D}$. Since $A$ is an algebraic torus, it is smooth, so the hull is a formally smooth $\CC\pow{t}$-algebra, and $\mathcal{D}$ is unobstructed.

\subsection*{Notation and conventions}
Every ring is commutative with identity.
The set of non-negative (resp.\ positive) integers is denoted by $\NN$ (resp.\ $\NN^+$).
Every monoid is commutative and denoted additively.
A monoid is said to be \emph{sharp} if $0$ is the unique invertible element.
A monoid $Q$ is said to be \emph{toric} if there exist an integer $n \geq 0$ and rational polyhedral cone $\sigma$ in $\RR^n$ of dimension $n$ such that $Q$ is isomorphic to $\sigma \cap \ZZ^n$.

\subsection*{Acknowledgements}
We thank Alessio Corti and Helge Ruddat for many useful conversations. The first author was funded by Helge Ruddat's DFG grant RU~1629/4-1. He thanks JGU Mainz for its hospitality.


\section{Valuations and smooth deformation functors}

\subsection{Valuations on Noetherian complete local domains}

Following \cite[VI, \S3]{bourbaki_algebre_commutative_5_7}, we define valuations on integral domains.
On the disjoint union $\NN \cup \{ \infty \}$, we consider the extensions of the sum $+$ and the ordering $\leq$ from $\NN$ defined as follows: $\infty + \infty = \infty$, $n + \infty =  \infty + n = \infty$ and $ n \leq \infty$ for every $n \in \NN$.

\begin{definition} \label{def:valuation}
	A \emph{non-trivial valuation} on a ring $\Lambda$ is a function $\nu : \Lambda \to \NN \cup \{ \infty \}$ such that:
	\begin{enumerate}
		\item $\nu (ab) = \nu(a) + \nu(b)$ for all $a,b \in \Lambda$,
		\item $\nu (a+b) \geq \min \{ \nu(a), \nu(b) \} $ for all $a,b \in \Lambda$,
		\item $\nu^{-1}(\infty) = \{ 0 \}$,
		\item $\nu (\Lambda) \setminus \{ 0, \infty \} \neq \emptyset$.
	\end{enumerate}
\end{definition}

It follows that $\Lambda$ is an integral domain and $\nu(1) = 0$.

\begin{definition}
	Let $(\Lambda, \frakm)$ be a local domain. A non-trivial valuation $\nu$ on $\Lambda$ is said to be \emph{compatible with the $\frakm$-adic topology} if the the $\frakm$-adic topology coincides with the linear topology induced by the following descending filtration of ideals:
	$
	 \{a \in \Lambda \mid \nu(a) \geq n \}
	$
	as $n \in \NN$.
\end{definition}

\begin{rem}
If a local domain $(\Lambda, \frakm)$ has a non-trivial valuation which is compatible with the $\frakm$-adic topology, then $\Lambda$ is \emph{not} a field. Whereas, in Lemma~\ref{lem:toric_rings_have_valuations} below, we construct such a valuation on $\Lambda = k\pow{Q}$ for a sharp toric monoid $Q \not= 0$, there is no such valuation on $k = k\pow{0}$.
\end{rem}

\begin{example} \label{ex:power_series_has_valuation}
Fix an integer $m \geq 1$.
Fix a field $k$ and consider the power series ring $\Lambda = k \pow{t_1, \dots, t_m}$ with maximal ideal $\frakm$. Every element $w = (w_1, \dots, w_m) \in (\NN^+)^m$ induces a non-trivial valuation $\nu_w$ on $\Lambda$ which is compatible with the $\frakm$-adic topology as follows: if \[a = \sum_{i_1, \dots, i_m \geq 0} a_{i_1, \dots, i_m} t_1^{i_1} \cdots t_m^{i_m} \]
then
\[
\nu_w(a) = \min \left\{ w_1 i_1 + \cdots + w_m i_m \mid i_1 \geq 0, \ \dots, \ i_m \geq 0, \ a_{i_1, \dots, i_m} \neq 0    \right\}.
\]
\end{example}

If $\Lambda$ is a Noetherian complete local ring, then every power series with coefficients in $\Lambda$ and finitely many variables can be evaluated on tuples of elements of the maximal ideal of $\Lambda$.
Now we give a sufficient criterion that ensures that the zero power series is the only one for which every evaluation is zero:

\begin{lemma} \label{lemma:zero_power_series}
	Let $(\Lambda, \frakm)$ be a Noetherian complete local domain, and let $f \in \Lambda \pow{x_1, \dots, x_n}$ be a power series such that
	\[
	\forall a_1 \in \frakm, \dots, \forall a_n \in \frakm, \qquad f(a_1, \dots, a_n) = 0.
	\]
	If there exists a non-trivial valuation on $\Lambda$ which is compatible with the $\frakm$-adic topology, then $f = 0$.
\end{lemma}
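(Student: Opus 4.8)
The plan is to argue by induction on the number $n$ of variables, reducing everything to the case $n=1$, where the valuation does the actual work. Throughout I fix one element: since the $\frakm$-adic and the $\nu$-linear topologies coincide and $\frakm$ is open in the former, there is an integer $N$ with $\{a \in \Lambda \mid \nu(a) \ge N\} \subseteq \frakm$; by part (4) of Definition~\ref{def:valuation} there is $t_0$ with $0 < \nu(t_0) < \infty$, and then $t := t_0^N$ lies in $\frakm$ and has $e := \nu(t) \in \NN^+$, so $t^m \in \frakm$ for every $m \ge 1$. I also use that $\bigcap_{m} \{a \mid \nu(a) \ge m\} = \nu^{-1}(\infty) = \{0\}$ by part (3).

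For $n = 1$, suppose $g = \sum_{k \ge 0} b_k x^k \in \Lambda\pow{x}$ vanishes on $\frakm$ but $g \ne 0$, and let $k_0$ be minimal with $b_{k_0} \ne 0$. Evaluating at $t^m \in \frakm$ gives, for all $m \ge 1$,
\[
0 = g(t^m) = \sum_{k \ge k_0} b_k t^{mk}, \qquad \text{so} \qquad b_{k_0} t^{mk_0} = -\sum_{k > k_0} b_k t^{mk}.
\]
Each summand $b_k t^{mk}$ on the right has valuation $\nu(b_k) + mke \ge m(k_0+1)e$, and the valuation of a convergent series is at least the infimum of the valuations of its terms; comparing with the valuation $\nu(b_{k_0}) + mk_0 e$ of the left-hand side gives $\nu(b_{k_0}) \ge me$ for all $m \ge 1$, hence $\nu(b_{k_0}) = \infty$ and $b_{k_0} = 0$, a contradiction. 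So $g = 0$.

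For the inductive step, let $n \ge 2$ and assume the lemma for $n-1$ variables. Writing $f = \sum_{k \ge 0} g_k x_n^k$ with $g_k \in \Lambda\pow{x_1, \dots, x_{n-1}}$ (via $\Lambda\pow{x_1, \dots, x_n} = (\Lambda\pow{x_1, \dots, x_{n-1}})\pow{x_n}$), fix arbitrary $a_1, \dots, a_{n-1} \in \frakm$. The series $h := \sum_{k} g_k(a_1, \dots, a_{n-1}) x_n^k \in \Lambda\pow{x_n}$ satisfies $h(a_n) = f(a_1, \dots, a_n) = 0$ for all $a_n \in \frakm$ (iterated evaluation of a convergent substitution), so $h = 0$ by the case $n=1$; that is, $g_k(a_1, \dots, a_{n-1}) = 0$ for all $k$. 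Since $a_1, \dots, a_{n-1}$ were arbitrary in $\frakm$, each $g_k$ vanishes on $\frakm^{n-1}$, so $g_k = 0$ by the inductive hypothesis, and therefore $f = 0$.

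I expect the only delicate point to be the tail estimate in the case $n=1$: that $\nu$ applied to the convergent series $\sum_{k > k_0} b_k t^{mk}$ is at least the infimum of the valuations of the summands. This is where completeness is used — each $\{a \mid \nu(a) \ge c\}$ is a closed subgroup of $\Lambda$, so it contains the limit of the partial sums — and, together with the Hausdorff property coming from part (3) of Definition~\ref{def:valuation}, it closes the argument; the reduction in $n$ and the comparison of the two topologies are purely formal.
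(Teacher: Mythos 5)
Your proof is correct, and it reduces to one variable by a genuinely different mechanism than the paper does. The paper invokes Bourbaki's specialization lemma (Lemma~\ref{lemma:bourbaki}): for a non-zero $f$ one finds $u_1,\dots,u_{n-1}$ with $f(x^{u_1},\dots,x^{u_{n-1}},x)\neq 0$, which collapses all $n$ variables into one in a single substitution. You instead induct on $n$ via $\Lambda\pow{x_1,\dots,x_n}=(\Lambda\pow{x_1,\dots,x_{n-1}})\pow{x_n}$, which is more elementary and self-contained, at the price of having to justify the Fubini-type rearrangement $f(a_1,\dots,a_n)=\sum_k g_k(a_1,\dots,a_{n-1})a_n^k$; that step is fine (associativity of summation for summable families in a complete Hausdorff linearly topologized group, the family being summable since all but finitely many terms lie in any given $\frakm^\ell$), but it is the one place where you wave at something that deserves a citation, whereas Bourbaki's lemma lets the paper avoid it entirely. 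In the one-variable case the two arguments rest on the same tail estimate --- the valuation of a convergent series is at least the infimum of the valuations of its terms, which you get from closedness of the open subgroups $\{\nu\geq c\}$ and the paper gets from continuity of $\nu$ --- but you deploy it differently: the paper picks a single point $a\in\frakm$ with $\nu(a)>\nu(b_d)$ and shows $\nu(g(a))=\nu(b_da^d)<\infty$, while you evaluate at the whole sequence $t^m$ and let $m\to\infty$ to force $\nu(b_{k_0})=\infty$. Your preliminary construction of $t=t_0^N\in\frakm$ with $0<\nu(t)<\infty$, using that some $\{\nu\geq N\}$ is contained in the open ideal $\frakm$, is exactly the role that compatibility of $\nu$ with the $\frakm$-adic topology plays in the paper's choice of $a$ as well. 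Both routes are sound; yours avoids an external lemma, the paper's avoids the rearrangement of the multi-indexed sum.
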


Note that Lemma~\ref{lemma:zero_power_series} does not hold for every Noetherian complete local ring: e.g.\ $\Lambda = k[t]/(t^{r+1})$ and $f = t^r x \in \Lambda \pow{x}$, for any integer $r \geq 0$.

In the proof of Lemma~\ref{lemma:zero_power_series} we make use of the following:

\begin{lemma}[{\cite[VII, \S3, no.~7, Lemma~2]{bourbaki_algebre_commutative_5_7}}] \label{lemma:bourbaki}
Let $\Lambda$ be a ring, and fix a non-zero power series $f(x_1, \dots, x_n) \in \Lambda \pow{x_1, \dots, x_n}$ with coefficients in $\Lambda$.
Then there exist positive integers $u_1, \dots, u_{n-1}$ such that the power series $f(x^{u_1}, \dots, x^{u_{n-1}}, x) \in \Lambda \pow{x}$ is non-zero.
\end{lemma}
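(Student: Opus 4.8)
The plan is to prove the contrapositive: assuming $f \neq 0$, I will produce a point of $\frakm^n$ at which $f$ does not vanish. First I would reduce to a single variable using Lemma~\ref{lemma:bourbaki}. Applied to the non-zero series $f$, it provides positive integers $u_1, \dots, u_{n-1}$ such that $g(x) := f(x^{u_1}, \dots, x^{u_{n-1}}, x) \in \Lambda\pow{x}$ is non-zero. For any $a \in \frakm$, all of $a^{u_1}, \dots, a^{u_{n-1}}, a$ lie in $\frakm$ (since each $u_i \geq 1$ and $\frakm$ is an ideal), so the vanishing hypothesis on $f$ yields $g(a) = f(a^{u_1}, \dots, a^{u_{n-1}}, a) = 0$ for every $a \in \frakm$. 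Thus everything reduces to the one-variable assertion: a non-zero power series in $\Lambda\pow{x}$ cannot vanish at every element of $\frakm$.

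For this one-variable statement I would use the valuation $\nu$ to force a single monomial to dominate. Write $g = \sum_{i \geq 0} c_i x^i$, let $S = \{i : c_i \neq 0\} \neq \emptyset$, put $v_i = \nu(c_i) \in \NN$ for $i \in S$, and set $j = \min S$. By conditions (3) and (4) of Definition~\ref{def:valuation} there is $t \in \Lambda$ with $0 < \nu(t) < \infty$; such $t$ is a non-unit, hence $t \in \frakm$, and $a := t^m$ then satisfies $a \in \frakm$ with $N := \nu(a) = m \cdot \nu(t)$, which can be made arbitrarily large by increasing $m$. Choosing $m$ so that $N > v_j$, I claim the term $c_j a^j$ has strictly smaller valuation than every other term: for $i \in S$ with $i > j$ one computes $\nu(c_i a^i) - \nu(c_j a^j) = (v_i - v_j) + (i - j)N \geq N - v_j \geq 1$, using $v_i \geq 0$ and $i - j \geq 1$. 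Granting that this uniquely minimal term controls the whole sum, one gets $\nu(g(a)) = v_j + jN < \infty$, whence $g(a) \neq 0$ by condition (3), contradicting $g(a) = 0$.

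The step that needs the most care, and which I expect to be the main obstacle, is justifying $\nu(g(a)) = v_j + jN$ for the \emph{infinite} convergent sum $g(a) = \sum_{i \in S} c_i a^i$, rather than for a finite sum where the ultrametric inequality immediately isolates the strict minimum. The terms satisfy $\nu(c_i a^i) \geq iN \to \infty$, so the series converges $\frakm$-adically by completeness of $\Lambda$, and the estimate above shows that every term other than $c_j a^j$ lies in the ideal $V := \{x \in \Lambda : \nu(x) \geq v_j + jN + 1\}$. Here compatibility of $\nu$ with the $\frakm$-adic topology is essential: it makes $V$ an open, hence closed, subgroup of $\Lambda$, so the $\frakm$-adic limit of the partial sums of the tail again lies in $V$. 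The tail therefore has valuation at least $v_j + jN + 1$, whereas $\nu(c_j a^j) = v_j + jN$, and the ultrametric inequality for elements of distinct valuation gives $\nu(g(a)) = v_j + jN$, as claimed. The three ingredients all enter: completeness makes the evaluation meaningful, condition (3) turns finite valuation into non-vanishing, condition (4) supplies the element $t$ of positive finite valuation, and the compatibility hypothesis is exactly what bridges the $\nu$-estimate and the $\frakm$-adic convergence — which is precisely why, as the remark after Lemma~\ref{lemma:zero_power_series} notes, the conclusion can fail without it.
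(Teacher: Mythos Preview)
Your write-up is not a proof of the stated Lemma~\ref{lemma:bourbaki}: you never establish the existence of the exponents $u_1, \dots, u_{n-1}$, but rather \emph{invoke} Lemma~\ref{lemma:bourbaki} as a black box in your first paragraph (``First I would reduce to a single variable using Lemma~\ref{lemma:bourbaki}''). What you have actually written is a proof of Lemma~\ref{lemma:zero_power_series} --- the vanishing-power-series lemma --- which is the statement immediately preceding Lemma~\ref{lemma:bourbaki} in the paper and which uses Lemma~\ref{lemma:bourbaki} as its first reduction step. The paper itself gives no proof of Lemma~\ref{lemma:bourbaki}; it is simply quoted from Bourbaki. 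So as a proof of the statement you were asked about, this is circular.

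If your intent was to prove Lemma~\ref{lemma:zero_power_series}, then your argument is correct and follows essentially the same route as the paper's proof of that lemma: reduce to one variable via Bourbaki, pick $a \in \frakm$ with $\nu(a)$ large relative to the valuation of the lowest non-zero coefficient, and show that the leading term strictly dominates the tail. The only cosmetic difference is in passing the estimate to the infinite sum: the paper appeals to continuity of $\nu$ (citing Bourbaki, VI, \S5, no.~1, Proposition~1) to take the limit of the partial-sum inequality, whereas you phrase the same step as ``$V = \{x : \nu(x) \geq v_j + jN + 1\}$ is open by compatibility, hence closed, so the $\frakm$-adic limit of the tail stays in $V$.'' These are equivalent uses of the compatibility hypothesis. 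Your justification that $\nu(t) > 0$ forces $t \in \frakm$ (via $t$ being a non-unit in a local ring) is also fine and slightly more direct than the paper's phrasing.
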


\begin{proof}[Proof of Lemma~\ref{lemma:zero_power_series}]
For a contradiction assume $f \neq 0$.
By Lemma~\ref{lemma:bourbaki} there exist $u_1, \dots, u_{n-1} \in \NN^+$ such that the power series $g(x) := f(x^{u_1}, \dots, x^{u_{n-1}}, x) \in \Lambda \pow{x}$ is non-zero.
It is clear that $g(a) = f(a^{u_1}, \dots, a^{u_{n-1}}, a) = 0$ for every $a \in \frakm$.

Let $\nu : \Lambda \to \NN \cup \{ \infty \}$ be  a non-trivial valuation which is compatible with the $\frakm$-adic topology.
Set $g = \sum_{i \geq 0} b_i x^i$ with $b_i \in \Lambda$.
Let $d \geq 0$ be the minimum index $i \geq 0$ such that $b_i \neq 0$.

By (3) in Definition~\ref{def:valuation} $\nu (b_d) \in \NN$. By (4) there exists an element in $\Lambda$ whose valuation is a positive integer. By taking a sufficient high power, since $\nu$ is compatible with the $\frakm$-adic topology, we can find $a \in \frakm \setminus \{ 0 \}$ such that $\nu(a) > \nu(b_d)$.

For every $i \geq 1$ we have
\begin{align*}
\nu (b_{d+i} a^{d+i}) = \nu(b_{d+i}) + i \nu(a) + d \nu(a) \geq \nu(a) + d \nu(a) > \nu(b_d) + d \nu(a) = \nu(b_d a^d).
\end{align*}
Therefore, by (2) in Definition~\ref{def:valuation}, for every $r \geq 1$ we have
\begin{align*}
\nu \left(\sum_{i=1}^r b_{d+i} a^{d+i}\right) > \nu(b_d a^d).
\end{align*}
By taking the limit for $r \to + \infty$, since $\nu : \Lambda \to \NN \cup \{ \infty \}$ is continuous by \cite[VI, \S5, no.~1, Proposition~1]{bourbaki_algebre_commutative_5_7}, we get
\begin{equation*}
\nu \left(\sum_{j > d} b_j a^j \right) > \nu(b_d a^d).
\end{equation*}
From
\begin{equation*}
g(a) = b_d a^d + \left(\sum_{j > d} b_j a^j \right)
\end{equation*}
we deduce $\nu (g(a)) = \nu (b_d a^d) \neq \infty$, so $g(a) \neq 0$ which is a contradiction.
\end{proof}


\subsection{Formally smooth algebras} \label{sec:formally_smooth_algebras}
We fix a Noetherian complete local ring $\Lambda$; let $\m$ be its maximal ideal and let $k = \Lambda / \m$ be its residue field.

We denote by $\comp{\Lambda}$ the category of Noetherian complete local $\Lambda$-algebras with residue field $k$.
Arrows in $\comp{\Lambda}$ are $\Lambda$-algebra homomorphisms which are compatible with the projection onto $k$.
In particular, all arrows in $\comp{\Lambda}$ are local homomorphisms.
Let $\art{\Lambda}$ be the full subcategory of $\comp{\Lambda}$ consisting of Artinian rings.
It is clear that $\comp{k}$ (resp.\ $\art{k}$) is a full subcategory of $\comp{\Lambda}$ (resp.\ $\art{\Lambda}$) by considering a $k$-algebra as a $\Lambda$-algebra via $\Lambda \onto k$.

An object $R$ in $\comp{\Lambda}$ is called \emph{formally smooth (over $\Lambda$)} if it is isomorphic to the power series ring $\Lambda \pow{x_1, \dots, x_n}$ for some non-negative integer $n$. We refer the reader to \cite[Appendix~B]{talpo_vistoli} for properties of formally smooth algebras.

\begin{proposition} \label{prop:smoothness_of_hR}
	Let $(\Lambda,\frakm,k)$ be a Noetherian complete local domain with a non-trivial valuation which is compatible with the $\frakm$-adic topology.
Let
	 $R$ be an object in $\comp{\Lambda}$ such that,
for every integer $\ell \geq 1$, the function 
		\[
		\HHom_{\comp{\Lambda}} \left( R, \Lambda / \frakm^{\ell+1} \right) \longrightarrow \HHom_{\comp{\Lambda}} \left( R, \Lambda / \frakm^{\ell} \right)
		\]
		induced by $\Lambda / \frakm^{\ell +1} \onto \Lambda / \frakm^\ell$ is surjective.

	Then $R$ is a formally smooth $\Lambda$-algebra.
\end{proposition}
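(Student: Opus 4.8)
The plan is to realize $R$ as the quotient of a power series ring over $\Lambda$ by a \emph{minimal} presentation, reduce the statement ``the kernel vanishes'' to the vanishing of certain power series on all tuples of elements of $\frakm$, and then invoke Lemma~\ref{lemma:zero_power_series}. Everything except that last step works over an arbitrary Noetherian complete local $\Lambda$; the valuation (and the domain hypothesis) enters only through Lemma~\ref{lemma:zero_power_series}.

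First I would fix the presentation. Since $R$ is a Noetherian complete local $\Lambda$-algebra with residue field $k$, it is a quotient of a power series ring; choosing $x_1, \dots, x_n \in \frakm_R$ whose classes form a $k$-basis of $\frakm_{\overline R}/\frakm_{\overline R}^2$, where $\overline R := R \otimes_\Lambda k$ and $\frakm_R$ denotes the maximal ideal of $R$, yields a surjection $P := \Lambda\pow{x_1, \dots, x_n} \onto R$ with kernel $J$. Reducing modulo $\frakm$, the induced map $k\pow{x_1, \dots, x_n} \onto \overline R$ is an isomorphism on cotangent spaces, so its kernel lies in the square of the maximal ideal; lifting back, $J \subseteq \frakm_P^2 + \frakm P$, where $\frakm_P$ is the maximal ideal of $P$. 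It suffices to prove $J = 0$: then $R \cong \Lambda\pow{x_1, \dots, x_n}$ is formally smooth.

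Now any $f \in P = \Lambda\pow{x_1, \dots, x_n}$ can be evaluated at a tuple $\underline a = (a_1, \dots, a_n) \in \frakm^n$ to give $f(\underline a) \in \Lambda$, and by Lemma~\ref{lemma:zero_power_series} it is enough to show $f(\underline a) = 0$ for every $f \in J$ and every $\underline a \in \frakm^n$. Fix such an $\underline a$. By the Krull intersection theorem, $f(\underline a) = 0$ for all $f \in J$ will follow once, for every $\ell \geq 1$, the $\Lambda$-algebra homomorphism $\phi_\ell \colon P \to \Lambda/\frakm^\ell$, $x_i \mapsto a_i$, annihilates $J$, i.e.\ factors through $R$ as some $\overline\phi_\ell \in \HHom_{\comp{\Lambda}}(R, \Lambda/\frakm^\ell)$. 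I would prove this by induction on $\ell$. For $\ell = 1$ the map $\phi_1$ is the structure map $P \to k$, which factors through $R$. For the inductive step, assume $\phi_\ell = \overline\phi_\ell \circ (P \onto R)$; the hypothesis of the proposition furnishes a lift $\chi \in \HHom_{\comp{\Lambda}}(R, \Lambda/\frakm^{\ell+1})$ of $\overline\phi_\ell$. Let $\tilde a_i \in \Lambda/\frakm^{\ell+1}$ be the image of $x_i$ under $P \onto R \xrightarrow{\chi} \Lambda/\frakm^{\ell+1}$; then $\tilde a_i \equiv a_i \pmod{\frakm^\ell}$, and the evaluation homomorphism $f \mapsto f(\tilde a_1, \dots, \tilde a_n)$ equals $\chi \circ (P \onto R)$, so it kills $J$. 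Writing $\epsilon_i := \tilde a_i - a_i \in \frakm^\ell/\frakm^{\ell+1}$, the formal Taylor expansion gives, in $\Lambda/\frakm^{\ell+1}$,
\[
f(\tilde a_1, \dots, \tilde a_n) = f(a_1, \dots, a_n) + \sum_{i=1}^n (\partial_i f)(\underline a)\, \epsilon_i ,
\]
because every higher term involves a product $\epsilon_i \epsilon_j \in \frakm^{2\ell} \subseteq \frakm^{\ell+1}$ (recall $\ell \geq 1$). Since $f \in J \subseteq \frakm_P^2 + \frakm P$, each $\partial_i f$ lies in $\frakm_P$, hence $(\partial_i f)(\underline a) \in \frakm$ and $(\partial_i f)(\underline a)\,\epsilon_i \in \frakm^{\ell+1}$; therefore $f(\underline a) \equiv f(\tilde a_1, \dots, \tilde a_n) = 0 \pmod{\frakm^{\ell+1}}$, i.e.\ $\phi_{\ell+1}$ annihilates $J$. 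This closes the induction, and hence $J = 0$.

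The hard part is the inductive step, and within it the cancellation of the first-order term $\sum_i (\partial_i f)(\underline a)\,\epsilon_i$ modulo $\frakm^{\ell+1}$: this is exactly why a \emph{minimal} presentation is needed, for then $J \subseteq \frakm_P^2 + \frakm P$ and the partial derivatives of elements of $J$ evaluate into $\frakm$. The existence of the presentation, the Krull intersection step, and the $\frakm$-adic bookkeeping are routine, and I expect no difficulty there.
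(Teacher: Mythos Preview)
Your argument is correct and follows essentially the same route as the paper: take a minimal presentation $\Lambda\pow{x_1,\dots,x_n}\onto R$ with kernel contained in $\frakm P + (x_1,\dots,x_n)^2$, show that every $f$ in the kernel evaluates to zero on each tuple in $\frakm^n$ by lifting along $\Lambda/\frakm^{\ell+1}\onto\Lambda/\frakm^\ell$ and using that the first-order correction lands in $\frakm^{\ell+1}$, and conclude with Lemma~\ref{lemma:zero_power_series}. The only cosmetic differences are that the paper argues by contradiction (picking the critical $\ell$ with $J_a\subseteq\frakm^\ell$, $J_a\not\subseteq\frakm^{\ell+1}$) where you run an induction on $\ell$, and the paper handles the comparison $f(a)-f(b)\in\frakm^{\ell+1}$ directly from the shape $c_0+\sum c_ix_i+g$ of elements of the kernel where you phrase it as a Taylor expansion with $\partial_i f\in\frakm_P$; these are the same computation.
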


\begin{proof}
		Let $\frakm_R$ be the maximal ideal of $R$.
	Let $n$ be the dimension of the $k$-vector space $T_\Lambda^\vee R = \frakm_R / (\frakm R + \frakm_R^2)$.
	By \cite[Corollary~B.6]{talpo_vistoli} there exists a surjection $\Lambda \pow{x_1, \dots, x_n} \onto R$ in $\comp{\Lambda}$ whose kernel $I$ is contained in $ \frakm \pow{x_1, \dots, x_n} + (x_1, \dots, x_n)^2$.
	In other words, $R = \Lambda \pow{x_1, \dots, x_n} / I$ and every element in $I$ is of the form
	\begin{equation} \label{eq:form_elements_I}
	c_0 + c_1 x_1 + \cdots + c_n x_n + g
	\end{equation}
	where $c_0, c_1, \dots, c_n \in \frakm$ and $g$ is a power series with order $\geq 2$.

Since $\Lambda$ is $\frakm$-adically complete, for every $a = (a_1, \dots, a_n) \in \frakm \times \cdots \times \frakm$ we can consider the evaluation homomorphism 
\begin{equation*}
\mathrm{ev}_a : \Lambda \pow{x_1, \dots, x_n} \longrightarrow \Lambda
\end{equation*}
defined by $f \mapsto f(a)$.
It is a surjective local homomorphism.
 Denote by $J_a$ the image of $I$ under $\mathrm{ev}_a$; $J_a$ is an ideal of $\Lambda$ which is contained in $\frakm$.
 We want to show that $J_a = 0$.
 By contradiction let us assume that $J_a \neq 0$. By the Krull intersection theorem $\bigcap_{\ell \geq 0} \frakm^\ell = 0$. Therefore there exists an integer $\ell \geq 1$ such that $J_a \subseteq \frakm^\ell$ and $J_a \nsubseteq \frakm^{\ell + 1}$.
 Consider the composition
 \[
 \Lambda \pow{x_1, \dots, x_n} \overset{\mathrm{ev}_a}{\onto} \Lambda \onto \Lambda / J_a \onto \Lambda / \frakm^\ell;
 \]
its kernel contains $I$, therefore we have a surjective homomorphism
\[
\phiv_\ell : R = \Lambda \pow{x_1, \dots, x_n} / I \longrightarrow \Lambda / \frakm^\ell.
\]
From the assumption we deduce the existence of a homomorphism
\[
\phiv_{\ell +1} : R \longrightarrow \Lambda / \frakm^{\ell + 1}
\] such that $\phiv_\ell = \theta_{\ell} \circ \phiv_{\ell +1}$, where $\theta_{\ell} : \Lambda / \frakm^{\ell +1} \onto \Lambda / \frakm^\ell$ is the canonical projection map.
\begin{equation*}
\xymatrix{
	\\
\Lambda \pow{x_1, \dots, x_n} \ar@{->>}[r] & R = \Lambda \pow{x_1, \dots, x_n} / I \ar@{->>}[r] \ar[rrd]_{\phiv_{\ell +1}} \ar@{->>}@/^2pc/[rr]^{\phiv_\ell} & \Lambda / J_a \ar@{->>}[r] & \Lambda / \frakm^\ell \\
	& & &														\Lambda / \frakm^{\ell + 1} \ar@{->>}[u]_{\theta_{\ell}}
}
\end{equation*}
For $i=1, \dots, n$, let $b_i \in \Lambda$ such that $b_i + \frakm^{\ell +1} \in \Lambda / \frakm^{\ell +1}$ is the image of $x_i + I \in R$ via $\phiv_{\ell +1}$.
Set $b = (b_1, \dots, b_n) \in \frakm \times \cdots \times \frakm$.
In other words $\phiv_{\ell +1}$ is induced by the evaluation on $b$.
In particular this implies
\begin{equation} \label{eq:appartenenza_b}
\forall f \in I, \ f(b) \in \frakm^{\ell +1}.
\end{equation}
We have a commutative diagram
\begin{equation*}
\xymatrix{
\Lambda \pow{x_1, \dots, x_n} \ar[r]^{\ \ \ \ \ \ \mathrm{ev}_a} \ar[rd]_{\mathrm{ev}_b} & \Lambda \ar@{->>}[r] & \Lambda / \frakm^\ell \\
& \Lambda \ar@{->>}[r] & \Lambda / \frakm^{\ell + 1} \ar@{->>}[u]_{\theta_{\ell}}
}
\end{equation*}
where $\Lambda \onto \Lambda / \frakm^\ell$ and $\Lambda \onto \Lambda / \frakm^{\ell+1}$ are the canonical projection maps.
For every index $1 \leq i\leq n$, it is clear that $a_i$ and $b_i$ have the same image in $\Lambda / \frakm^\ell$, i.e., $a_i - b_i \in \frakm^\ell$. From the particular form of elements of $I$ in \eqref{eq:form_elements_I} we deduce that
\begin{equation*}
\forall f \in I, \ f(a) - f(b) \in \frakm^{\ell+1}.
\end{equation*}
By \eqref{eq:appartenenza_b} this implies $f(a) \in \frakm^{\ell +1}$ for every $f \in I$. Hence $J_a \subseteq \frakm^{\ell + 1}$, which is a contradiction. Therefore $J_a = 0$.

We have proved that
\begin{equation*}
\forall a \in \frakm \times \cdots \times \frakm, \ \forall f \in I, \ f(a) = 0.
\end{equation*}
By Lemma~\ref{lemma:zero_power_series} we deduce $I = 0$. Hence $R = \Lambda \pow{x_1, \dots, x_n}$.
\end{proof}

\subsection{Deformation functors} \label{sec:deformation_functors}
Here we briefly summarize some notions from \cite{schlessinger_functors_artin_rings}.
We fix a Noetherian complete local ring $\Lambda$ with maximal ideal $\frakm$ and residue field $k = \Lambda / \frakm$.
A \emph{deformation functor} is a functor $F : \art{\Lambda} \to \sets$ such that $F(k)$ is a singleton and $F$ satisfies Schlessinger's conditions (H1) and (H2).
If $R \in \comp{\Lambda}$ then $h_R := \sHom{\comp{\Lambda}}{R}{-}$ is a deformation functor.
If $F$ is a deformation functor, then the set $F(k[t]/(t^2))$ has a natural structure as $k$-vector space, which is called the \emph{tangent space} of $F$.

A natural transformation $F \to G$ of deformation functors is called \emph{smooth} if for every surjection $B \to A$ in $\art{\Lambda}$ the function $F(B) \to F(A) \times_{G(A)} G(B)$ is surjective.
A deformation functor $F$ is called \emph{smooth} if the natural transformation $F \to h_\Lambda$ is smooth.
For $R \in \comp{\Lambda}$, $R$ is a formally smooth $\Lambda$-algebra if and only if $h_R$ is smooth.

A \emph{hull} for a deformation functor $F$ is an object $R \in \comp{\Lambda}$ such that there exists a smooth natural transformation $h_R \to F$ which induces a bijection on tangent spaces. If a hull exists, it is unique.
	
Now we give a sufficient criterion for the smoothness of a deformation functor.

\begin{proposition} \label{prop:lifting_functors_imply_smoothness}
	Let $(\Lambda,\frakm,k)$ be a Noetherian complete local domain with a non-trivial valuation which is compatible with the $\frakm$-adic topology.
	Let $F : \art{\Lambda} \to \sets$ be a deformation functor with finite dimensional tangent space.

If,	for every integer $\ell \geq 1$, the function 
	\[
	F \left( \Lambda / \frakm^{\ell+1} \right) \longrightarrow F \left( \Lambda / \frakm^{\ell} \right)
	\]
	induced by $\Lambda / \frakm^{\ell +1} \onto \Lambda / \frakm^\ell$ is surjective, then $F$ is smooth.
\end{proposition}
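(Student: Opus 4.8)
The plan is to deduce the statement from Proposition~\ref{prop:smoothness_of_hR} by passing through a hull of $F$.

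\textbf{Step 1 (a hull, and a surjectivity).} Since $F$ is a deformation functor with finite-dimensional tangent space, Schlessinger's theorem \cite{schlessinger_functors_artin_rings} provides a hull of $F$: an object $R \in \comp{\Lambda}$ together with a smooth natural transformation $\eta \colon h_R \to F$ which induces a bijection on tangent spaces. Applying the smoothness of $\eta$ to the surjection $A \onto k$ in $\art{\Lambda}$, and using that $h_R(k) = \HHom_{\comp{\Lambda}}(R,k)$ and $F(k)$ are singletons, one sees that $\eta_A \colon h_R(A) \to F(A)$ is surjective for every object $A$ of $\art{\Lambda}$.

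\textbf{Step 2 (reduction to formal smoothness of $R$).} Since $\Lambda$ is the initial object of $\comp{\Lambda}$, the set $h_\Lambda(A)$ is a singleton for every $A$ in $\art{\Lambda}$; hence a deformation functor $G$ is smooth exactly when $G(B) \to G(A)$ is surjective for every surjection $B \onto A$ in $\art{\Lambda}$, and in particular $R$ is a formally smooth $\Lambda$-algebra exactly when $h_R(B) \to h_R(A)$ is surjective for every such surjection. I claim that it suffices to prove that $R$ is a formally smooth $\Lambda$-algebra. Indeed, granting this, given a surjection $B \onto A$ in $\art{\Lambda}$ and $x \in F(A)$, we lift $x$ to some $\tilde x \in h_R(A)$ by Step~1, lift $\tilde x$ to some $y \in h_R(B)$ by the formal smoothness of $R$, and then $\eta_B(y) \in F(B)$ maps to $x$. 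Thus $F$ is smooth.

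\textbf{Step 3 (applying Proposition~\ref{prop:smoothness_of_hR}).} Fix an integer $\ell \geq 1$. The rings $\Lambda/\frakm^{\ell}$ and $\Lambda/\frakm^{\ell+1}$ are Artinian local $\Lambda$-algebras with residue field $k$, hence objects of $\art{\Lambda}$, and $\Lambda/\frakm^{\ell+1} \onto \Lambda/\frakm^{\ell}$ is a surjection there. By Proposition~\ref{prop:smoothness_of_hR}, to conclude that $R$ is formally smooth it is enough to show that
\[
h_R(\Lambda/\frakm^{\ell+1}) = \HHom_{\comp{\Lambda}}(R, \Lambda/\frakm^{\ell+1}) \longrightarrow \HHom_{\comp{\Lambda}}(R, \Lambda/\frakm^{\ell}) = h_R(\Lambda/\frakm^{\ell})
\]
is surjective for every $\ell \geq 1$. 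So let $\xi \in h_R(\Lambda/\frakm^{\ell})$ and let $\bar\xi \in F(\Lambda/\frakm^{\ell})$ be its image under $\eta$. By the hypothesis of the present proposition, there is $\zeta \in F(\Lambda/\frakm^{\ell+1})$ mapping to $\bar\xi$; then $(\xi, \zeta)$ is an element of $h_R(\Lambda/\frakm^{\ell}) \times_{F(\Lambda/\frakm^{\ell})} F(\Lambda/\frakm^{\ell+1})$, and the smoothness of $\eta$ applied to $\Lambda/\frakm^{\ell+1} \onto \Lambda/\frakm^{\ell}$ produces $\tilde\xi \in h_R(\Lambda/\frakm^{\ell+1})$ lying over $\xi$. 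Hence $R$ is formally smooth over $\Lambda$, and $F$ is smooth by Step~2.

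This argument is essentially an assembly of standard facts about hulls together with Proposition~\ref{prop:smoothness_of_hR}, whose own proof — via Lemma~\ref{lemma:zero_power_series} and the valuation on $\Lambda$ — carries the genuine content. The only point requiring care is the bookkeeping of Step~3: transporting the surjectivity hypothesis on $F$ across the hull $\eta$ to obtain the corresponding surjectivity for $h_R$. Once that is done there is no further obstacle.
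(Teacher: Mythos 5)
Your proof is correct and follows essentially the same route as the paper: pass to a hull $R$ of $F$ (which exists by Schlessinger since the tangent space is finite-dimensional), transport the surjectivity hypothesis across the smooth map $h_R \to F$ to get surjectivity of $h_R(\Lambda/\frakm^{\ell+1}) \to h_R(\Lambda/\frakm^{\ell})$, apply Proposition~\ref{prop:smoothness_of_hR}, and conclude. The only cosmetic difference is that where the paper cites \cite[Proposition~2.5(iii)]{schlessinger_functors_artin_rings} to deduce smoothness of $F$ from smoothness of $h_R$, you reprove that implication directly in your Steps~1 and~2.
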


\begin{proof}
By \cite[Theorem~2.11]{schlessinger_functors_artin_rings} the functor $F$ has a hull $R$. Consider a smooth map $h_R \to F$.

Fix an arbitrary integer $\ell \geq 1$. 
Since
$
F(\Lambda / \frakm^{\ell + 1}) \to F(\Lambda / \frakm^\ell)
$
is surjective,
\begin{equation} \label{eq:smoothness_x}
F(\Lambda / \frakm^{\ell + 1}) \times_{F(\Lambda / \frakm^\ell)} h_R (\Lambda / \frakm^\ell) \longrightarrow h_R(\Lambda / \frakm^{\ell })
\end{equation}
is surjective.
Since $h_R \to F$ is smooth, the function
\begin{equation} \label{eq:smoothness_y}
h_R (\Lambda / \frakm^{\ell +1}) \longrightarrow F(\Lambda / \frakm^{\ell + 1}) \times_{F(\Lambda / \frakm^\ell)} h_R (\Lambda / \frakm^\ell)
\end{equation}
is surjective.
By composing \eqref{eq:smoothness_y} and \eqref{eq:smoothness_x}, we obtain that $h_R(\Lambda / \frakm^{\ell + 1}) \to h_R(\Lambda / \frakm^\ell)$ is surjective.
By Proposition~\ref{prop:smoothness_of_hR}, $h_R$ is smooth.

We conclude with \cite[Proposition~2.5(iii)]{schlessinger_functors_artin_rings}.
\end{proof}

Recall from \S\ref{sec:formally_smooth_algebras} that $\art{k}$ is a full subcategory of $\art{\Lambda}$.

\begin{lemma}\label{restricted-hull}
	Let $F: \art{\Lambda} \to \sets$ be a deformation functor. Consider the restricted functor $F_0 := F \vert_{\art{k}}: \art{k} \to \sets$. Then:
	\begin{enumerate}
		\item $F_0$ is a deformation functor;
		\item if $R \in \comp{\Lambda}$ is the hull of $F$, then $R \otimes_\Lambda k$ is the hull of $F_0$.
	\end{enumerate}
\end{lemma}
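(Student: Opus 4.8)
The plan is to reduce both assertions to the elementary fact that base change along $\Lambda \onto k$ is left adjoint to the inclusion $\comp{k} \subseteq \comp{\Lambda}$, together with the observation that $\art{k}$ is closed under fibre products inside $\art{\Lambda}$. For (1): if $A \to C \leftarrow B$ is a diagram in $\art{k}$, then the fibre product $A \times_C B$ formed in $\art{\Lambda}$ is again a $k$-algebra — its structure map $\Lambda \to A \times_C B$ factors through $\Lambda \onto k$, since this already holds for $A$, $B$ and $C$ — and it is simultaneously the fibre product in $\art{k}$; likewise $k[t]/(t^2)$ is the same object in $\art{k}$ and in $\art{\Lambda}$. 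Schlessinger's conditions (H1) and (H2) assert only that certain comparison maps $F(A \times_C B) \to F(A) \times_{F(C)} F(B)$ — with $C = k$ and $B = k[t]/(t^2)$ in the case of (H2) — are, respectively, surjective and bijective, and the fibre products occurring as targets are fibre products of sets, hence independent of the ambient category. Thus these conditions for $F_0$ follow by restriction from those for $F$, and since $F_0(k) = F(k) = \{\ast\}$ this proves (1).

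For (2), the key point is the natural identification, for every $A \in \art{k}$,
\[
\HHom_{\comp{\Lambda}}(R, A) \;=\; \HHom_{\comp{k}}(R \otimes_\Lambda k, A):
\]
a $\Lambda$-algebra homomorphism $R \to A$ sends the maximal ideal $\frakm \subseteq \Lambda$ to $0$ in $A$, because the structure map $\Lambda \to A$ factors through $k$, hence it kills $\frakm R$ and therefore factors uniquely through $R \otimes_\Lambda k = R / \frakm R$, compatibly with the projections onto $k$. Note that $R \otimes_\Lambda k$ lies in $\comp{k}$: it is a quotient of the Noetherian complete local ring $R$, hence Noetherian, complete, and local with residue field $k$. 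Consequently $(h_R)\vert_{\art{k}}$ and $h_{R \otimes_\Lambda k}$ agree as functors $\art{k} \to \sets$.

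Finally, I would take a smooth natural transformation $h_R \to F$ inducing a bijection on tangent spaces — which exists because $R$ is a hull of $F$ — and restrict it to $\art{k}$; under the identification above this yields a natural transformation $h_{R \otimes_\Lambda k} \to F_0$. It is smooth because every surjection $B \to A$ in $\art{k}$ is a surjection in $\art{\Lambda}$ and the map $h_{R \otimes_\Lambda k}(B) \to h_{R \otimes_\Lambda k}(A) \times_{F_0(A)} F_0(B)$ is literally the surjective map $h_R(B) \to h_R(A) \times_{F(A)} F(B)$. It induces a bijection on tangent spaces because the tangent space of $h_{R \otimes_\Lambda k}$ is $\HHom_{\comp{k}}(R \otimes_\Lambda k, k[t]/(t^2)) = \HHom_{\comp{\Lambda}}(R, k[t]/(t^2))$, that of $h_R$, while the tangent space of $F_0$ is $F_0(k[t]/(t^2)) = F(k[t]/(t^2))$, that of $F$, with the same comparison map. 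By (1), and since any representable functor is a deformation functor, the notion of hull applies to $F_0$; as hulls are unique, $R \otimes_\Lambda k$ is the hull of $F_0$. I do not anticipate a genuine obstacle here: the whole content is the adjunction above and the stability of $\art{k}$ under fibre products, the only points deserving an explicit line being that $R \otimes_\Lambda k$ really lies in $\comp{k}$ and that the set-theoretic fibre products in the definitions of Schlessinger's conditions and of smoothness do not depend on whether they are formed over $\art{k}$ or $\art{\Lambda}$.
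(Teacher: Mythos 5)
Your proof is correct and follows essentially the same route as the paper's: restrict the smooth map $h_R \to F$ to $\art{k}$ using that $\art{k} \into \art{\Lambda}$ preserves fibre products, and identify $(h_R)\vert_{\art{k}}$ with $h_{R\otimes_\Lambda k}$ via the surjection $R \to R\otimes_\Lambda k$. You simply spell out in more detail the steps the paper leaves implicit.
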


\begin{proof}
	(1) Since the embedding $\art{k} \into \art{\Lambda}$ preserves pull-backs of Artinian rings, the restricted functor $F_0$ satisfies Schlessinger's conditions (H1) and (H2).
	
	(2) It is clear that the tangent space of $F_0$ coincides with the tangent space of $F$.
	Consider a smooth map $h_R \to F$ which induces a bijection on tangent spaces.
 It remains smooth after restriction to $\art{k}$, so we have a smooth map $(h_R)_0 \to F_0$ of deformation functors which induces a bijection on tangent spaces. The surjection $R \to R \otimes_\Lambda k$ induces an isomorphism
	 \[
	 h_{R \otimes_\Lambda k} = \mathrm{Hom}_k(R \otimes_\Lambda k,-) \to \mathrm{Hom}_\Lambda(R,-) = (h_R)_0
	 \]
	of functors.
\end{proof}


\section{Proofs}

\subsection{Toric rings} \label{sec:toric}

Fix a field $k$ and monoid $Q$.
Consider the monoid $k$-algebra $k[Q]$ defined as follows: the underlying $k$-vector space of $k[Q]$ has a basis $\{z^q\}_{q \in Q}$ indexed by elements of $Q$ and the product on $k[Q]$ is the $k$-linear extension of the rule $z^{q_1} \cdot z^{q_2} = z^{q_1 + q_2}$ for all $q_1, q_2 \in Q$.
If $Q$ is finitely generated, then $k[Q]$ is of finite type over $k$.

If $Q$ is sharp, then the ideal $(z^q \mid q \in Q \setminus \{0\})$ is a maximal ideal in $k[Q]$; we denote by $k \pow{Q}$ the completion of $k[Q]$ with respect to this maximal ideal. Thus, if $Q$ is finitely generated and sharp, then $k \pow{Q}$ is a Noetherian complete local ring with residue field $k$.

\begin{lemma} \label{lem:toric_rings_have_valuations}
	If $k$ is a field and $Q$ is a non-zero sharp toric monoid, then $k \pow{Q}$ admits a non-trivial valuation which is compatible with the adic topology of the maximal ideal of $k \pow{Q}$.
\end{lemma}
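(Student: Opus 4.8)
The plan is to build the valuation on $k\pow{Q}$ from a single linear functional on the ambient lattice of $Q$ that is strictly positive on $Q \setminus \{0\}$, in direct analogy with the valuations $\nu_w$ of Example~\ref{ex:power_series_has_valuation}. First I would produce this functional. Write $Q = \sigma \cap \ZZ^n$ for a rational polyhedral cone $\sigma \subseteq \RR^n$ of dimension $n$. Since $Q$ is sharp, its unit group $Q \cap (-Q) = (\sigma \cap (-\sigma)) \cap \ZZ^n$ is $\{0\}$; as $\sigma \cap (-\sigma)$ is a rational linear subspace, this forces $\sigma \cap (-\sigma) = \{0\}$, i.e.\ $\sigma$ is strongly convex. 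Hence the dual cone $\sigma^\vee \subseteq \RR^n$ is again $n$-dimensional, so its interior contains a point of $\QQ^n$; clearing denominators yields $\phiv \in \mathrm{Hom}(\ZZ^n, \ZZ)$ with $\phiv(q) > 0$ for every $q \in Q \setminus \{0\}$. In particular $\phiv(Q) \subseteq \NN$, and $\phiv(Q) \setminus \{0\} \neq \emptyset$ because $Q \neq 0$.

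Next I would define, on the monoid algebra $k[Q]$, the function $\nu\big(\sum_q a_q z^q\big) = \min\{\phiv(q) \mid a_q \neq 0\}$ (with $\nu(0) = \infty$), and check it is a valuation in the sense of Definition~\ref{def:valuation}. Condition (2) is immediate from the definition. For condition (1), note that $k[Q]$ carries the $\NN$-grading by $\phiv$-degree and is an integral domain, being a subring of the Laurent polynomial ring $k[\ZZ^n]$; given nonzero $a,b \in k[Q]$ with $\nu(a) = d$ and $\nu(b) = e$, one has $\nu(ab) \geq d+e$ in general, while the $\phiv$-degree-$(d+e)$ homogeneous component of $ab$ equals the product of the (homogeneous, nonzero) degree-$d$ component of $a$ and the degree-$e$ component of $b$, which is nonzero since $k[Q]$ is a domain; hence $\nu(ab) = d+e$. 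Condition (3) is automatic, and (4) holds since $\nu(z^{q_0}) = \phiv(q_0) \in \NN^+$ for any $q_0 \in Q \setminus \{0\}$. Thus $(k[Q], \nu)$ is a valued ring.

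Then I would compare, on $k[Q]$, the $\nu$-filtration $\{a \mid \nu(a) \geq \ell\}$ with the $\frakm_{k[Q]}$-adic filtration, where $\frakm_{k[Q]} = (z^q \mid q \in Q \setminus\{0\})$. Let $q_1, \dots, q_m$ generate $Q$ as a monoid and set $M = \max_i \phiv(q_i)$. On the one hand, any finite product $z^{q_{i_1}} \cdots z^{q_{i_\ell}}$ of such generators has $\phiv$-degree $\geq \ell$, so $\frakm_{k[Q]}^\ell \subseteq \{a \mid \nu(a) \geq \ell\}$. On the other hand, writing $q = \sum_i c_i q_i$ with $c_i \in \NN$ gives $z^q \in \frakm_{k[Q]}^{\sum_i c_i}$ together with $\phiv(q) = \sum_i c_i \phiv(q_i) \leq M \sum_i c_i$, whence $\{a \mid \nu(a) \geq n\} \subseteq \frakm_{k[Q]}^{\lceil n/M \rceil}$. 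So the two filtrations are cofinal and induce the same topology on $k[Q]$; consequently $k\pow{Q}$, the completion of $k[Q]$ at $\frakm_{k[Q]}$, is precisely the completion of the valued ring $(k[Q], \nu)$.

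Finally I would extend the valuation to this completion. Along a Cauchy sequence in $k[Q]$ not converging to $0$, the values of $\nu$ are eventually constant, because $\nu(x+y) = \min\{\nu(x), \nu(y)\}$ whenever $\nu(x) \neq \nu(y)$; this defines $\hat\nu$ on $k\pow{Q}$, independently of the chosen representative, and properties (1)--(4) of Definition~\ref{def:valuation} pass to the limit by continuity of $\nu$ (cf.\ \cite[VI, \S5]{bourbaki_algebre_commutative_5_7}); in particular $\hat\nu(z^{q_0}) = \phiv(q_0) \in \NN^+$, and $\hat\nu^{-1}(\infty) = \{0\}$ by separatedness of the completion. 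By the cofinality established above, the topology defined by $\hat\nu$ coincides with the $\frakm$-adic topology of $k\pow{Q}$, so $\hat\nu$ is compatible with it, which is the assertion. The only genuinely geometric input is the construction of $\phiv$; the step demanding the most care is the topology comparison in the third paragraph, equivalently the verification that completing $k[Q]$ at its maximal ideal agrees with completing the valued ring $(k[Q], \nu)$, together with the bookkeeping needed to extend $\nu$ to the completion.
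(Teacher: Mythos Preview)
Your proof is correct and follows essentially the same approach as the paper: both pick an integral point $w$ (your $\phiv$) in the interior of the dual cone $\sigma^\vee$ and define the valuation by $\nu\big(\sum_q b_q z^q\big) = \min\{\langle q, w\rangle \mid b_q \neq 0\}$. The paper's proof is much terser---it simply writes down this formula and asserts compatibility with the adic topology---whereas you supply the details (strong convexity from sharpness, verification of the valuation axioms, cofinality of the two filtrations, and extension to the completion) that the paper leaves to the reader.
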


\begin{proof}
This is a generalization of Example~\ref{ex:power_series_has_valuation}.
Set $M \simeq \ZZ^n$ and let $\sigma$ be an $n$-dimensional rational polyhedral cone in $M_\RR$ such that $Q = \sigma \cap M$.
We consider the dual lattice $N = \HHom_\ZZ(M,\ZZ)$ with duality pairing $\langle - , - \rangle : M \times N \to \ZZ$ and  the dual cone
\[
\sigma^\vee = \{ v \in N_\RR \mid \forall q \in \sigma, \langle q, v \rangle \geq 0 \}.
\]
Pick $w \in N$ in the interior of $\sigma^\vee$, and consider the valuation $\nu_w : k \pow{Q} \to \NN \cup \{ \infty \}$ defined by
\[
\nu_w \left( \sum_{q \in Q} b_q z^q \right) = \min \{ \langle q, w \rangle \mid q \in Q, b_q \neq 0 \}.
\]
This is compatible with the adic topology of the maximal ideal of $k \pow{Q}$.
\end{proof}

\subsection{Log smooth deformation theory}

We fix a field $k$ (of arbitrary characteristic) and a sharp toric monoid $Q$.
As explained in the introduction, every ring $A$ in $\art{k \pow{Q}}$ gives rise to a log scheme $\Spec(Q \to A)$ with underlying scheme $\Spec A$.
In particular, $k$ gives rise to $S_0 := \Spec (Q \to k)$, which is the log scheme on the scheme $\Spec k$ with ghost sheaf $Q$.

We fix a proper log smooth saturated morphism $f_0: X_0 \to S_0$ of log schemes of relative dimension $d$. Let $\Omega^1_{X_0/S_0}$ be the sheaf of log differentials of $X_0$ relative to $S_0$; let furthermore $\omega_{X_0/S_0} = \Omega^d_{X_0/S_0}$ be the log canonical line bundle on $X_0$.

F.~Kato defines in \cite{Kato1996} the functor of log smooth deformations of $X_0 \to S_0$, i.e., the functor
\[
\LD{X_0 / S_0} : \art{k \pow{Q}} \longrightarrow \sets
\]
which to every object $A$ in $\art{k \pow{Q}}$ associates the set of isomorphism classes of log smooth deformations $f: X \to \Spec (Q \to A)$ of $f_0: X_0 \to S_0$ (see \cite[Definition~8.1]{Kato1996}).
This functor is a deformation functor and has a hull by \cite[Theorem~8.7]{Kato1996}.

Now we assume furthermore that $k$ has characteristic $0$. In \cite{Felten2020}, the first author proves that $\LD{X_0/S_0}$ is controlled by a $k \pow{Q}$-linear \emph{pre}differential graded Lie algebra (pdgla, for short) $(L^\bullet_{X_0/S_0},[-,-],d,\ell)$.
More precisely, $(L^\bullet_{X_0/S_0},[-,-])$ is a graded Lie algebra over $k \pow{Q}$ endowed with a derivation $d$ which need not be a differential but admits an element $\ell \in L^2_{X_0/S_0}$ such that $d^2 = [\ell,-]$.
Via a modified Maurer--Cartan equation we associate a deformation functor $\art{k \pow{Q}} \to \sets$, which is isomorphic to $\LD{X_0 / S_0}$. 
Directly from the definitions it follows that the restricted functor $\LD{X_0/S_0} \vert_{\art{k}}$ is controlled by the $k$-linear (ordinary) dgla $L^\bullet_{X_0/S_0} \otimes_{k\pow{Q}} k$. In particular, it has a hull; Lemma~\ref{restricted-hull} shows that the hull of $\LD{X_0/S_0} \vert_{\art{k}}$ is $R \otimes_{k\pow{Q}} k$ where $R$ is the hull of $\LD{X_0/S_0}$.

\subsection{Homotopy abelianity} \label{sec:homotopy_abelian}

In this section, we prove that the restricted deformation functor $\LD{X_0/S_0} \vert_{\art{k}}$ is unobstructed; in particular, we obtain the log BTT theorem in the case $Q = 0$. We fix a field $k$ of characteristic $0$. 

Recall that a differential graded Lie algebra (dgla, for short) $L^\bullet$ is \emph{abelian} if $[-,-] = 0$, and \emph{homotopy abelian} if it is quasi-isomorphic to an abelian dgla. This is the case if and only if $L^\bullet$ is formal and $H^\bullet(L^\bullet)$ is abelian; in this case, the associated deformation functor $\Def{L^\bullet} : \art{k} \to \sets$ is smooth. We say that a deformation functor $F : \art{k} \to \sets$ is \emph{controlled} by the dgla $L^\bullet$ if it is isomorphic to $\Def{L^\bullet}$. We refer the reader to \cite{manetti_seattle, cosimplicial_dglas_in_def_theory} for details.

Now we state Iacono's abstract Bogomolov--Tian--Todorov theorem from \cite{AbstractBTT2017} because it will be useful to us below.
Recall that a \emph{Cartan homotopy} $\mathbf{i}: L^\bullet \to M^\bullet$ of dglas is a homogeneous linear map of degree $-1$ such that
\[
\mathbf{i}_{[a,b]} = [\mathbf{i}_a,d_M\mathbf{i}_b] \quad \mathrm{and} \quad [\mathbf{i}_a,\mathbf{i}_b] = 0
\]
are satisfied; here, $\mathbf{i}_a := \mathbf{i}(a) \in M^\bullet$ is traditional notation since the elements of $M^\bullet$ are typically homomorphisms themselves---compare this with our choice for $M^\bullet$ below. The \emph{Lie derivative} of $\mathbf{i}$ is given by $\mathbf{l}_a = d_M\mathbf{i}_a + \mathbf{i}_{d_La}$; it defines a homomorphism $\mathbf{l}: L^\bullet \to M^\bullet$ of complexes, which is homotopic to $0$ via the homotopy $\mathbf{i}$.

\begin{thm}[{Abstract Bogomolov--Tian--Todorov theorem \cite[Theorem~3.3]{AbstractBTT2017}}] \label{aBTT}
	Let $L^\bullet, M^\bullet$ be dglas over a field $k$ of 
	characteristic $0$, let $\mathbf{i}: L^\bullet \to M^\bullet$ be a Cartan homotopy, let $H^\bullet \subseteq M^\bullet$ 
	be a sub-dgla, and assume that 
	\begin{enumerate}\itemsep 0pt
		\item $\mathbf{l}_a \in H^\bullet$ for every $a \in L^\bullet$;
		\item $H^\bullet \to M^\bullet$ is injective in cohomology, i.e., $H^\bullet(H^\bullet) \to H^\bullet(M^\bullet)$ is injective;
		\item the morphism $\mathbf{i}: L^\bullet \to M^\bullet/H^\bullet[-1]$ of complexes is injective in cohomology.
	\end{enumerate}
	Then $L^\bullet$ is homotopy abelian.
\end{thm}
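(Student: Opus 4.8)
The statement is due to Iacono, so the plan is to reproduce the argument of \cite{AbstractBTT2017}, which reduces homotopy abelianity of $L^\bullet$ to a statement about its derived ideal and then extracts that statement from hypotheses (1)--(3) via the algebra of Cartan homotopies. The general fact, valid over a field of characteristic $0$, that I would isolate first is the following \emph{reduction step}: if $N^\bullet$ is a dgla for which the inclusion $[N^\bullet,N^\bullet]\hookrightarrow N^\bullet$ of the derived ideal induces the zero map in cohomology, then $N^\bullet$ is homotopy abelian. Indeed, in a minimal $L_\infty$-model of $N^\bullet$ obtained by homotopy transfer, every structure map $q_n$ with $n\geq 2$ is a sum of iterated brackets with homotopies inserted between them; the outermost operation of each summand is a bracket, so $q_n$ takes values in $[N^\bullet,N^\bullet]$, and on cocycle inputs it produces a harmonic representative whose cohomology class is that of a cocycle of $[N^\bullet,N^\bullet]$, hence is $0$ by hypothesis. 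Thus the minimal model of $N^\bullet$ is abelian.

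Next I would record the formal consequences of (1)--(3). Viewing $\mathbf{i}$ as a degree-$0$ map $L^\bullet\to M^\bullet[-1]$, the relation $\mathbf{l}_a=d_M\mathbf{i}_a+\mathbf{i}_{d_La}$ together with (1) shows that $\mathbf{i}$ descends to a morphism of complexes $\overline{\mathbf{i}}\colon L^\bullet\to(M^\bullet/H^\bullet)[-1]$, which by (3) is injective in cohomology; and, since $\mathbf{i}$ is a contracting homotopy for $L^\bullet\xrightarrow{\mathbf{l}}H^\bullet\hookrightarrow M^\bullet$, the dgla morphism $\mathbf{l}$ is zero in cohomology. Expanding $0=d_M[\mathbf{i}_a,\mathbf{i}_b]$ and substituting $d_M\mathbf{i}_a=\mathbf{l}_a-\mathbf{i}_{d_La}$ gives the key identity $\mathbf{i}_{[a,b]}=\pm[\mathbf{l}_a,\mathbf{i}_b]=\pm[\mathbf{i}_a,\mathbf{l}_b]$, which expresses the failure of $\mathbf{i}$ to annihilate brackets solely in terms of the cohomologically trivial map $\mathbf{l}$, whose values lie in $H^\bullet$.

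Then I would run the following argument. For a cocycle $w\in[L^\bullet,L^\bullet]$, write $w=\sum_i[a_i,b_i]$; by the key identity $\mathbf{i}_w=\pm\sum_i[\mathbf{i}_{a_i},\mathbf{l}_{b_i}]$ with $\mathbf{l}_{b_i}\in H^\bullet$. Using (2) — injectivity of $H^\bullet(H^\bullet)\to H^\bullet(M^\bullet)$, which lets one replace a cohomologically trivial element of $H^\bullet$ by a coboundary in $H^\bullet$ — together with $[H^\bullet,H^\bullet]\subseteq H^\bullet$ and the Leibniz rule, one shows that $\mathbf{i}_w$ is a coboundary modulo $H^\bullet$, i.e.\ $\overline{\mathbf{i}}_\ast[w]=0$; since $\overline{\mathbf{i}}$ is injective in cohomology, $[w]=0$ in $H^\bullet(L^\bullet)$. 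Hence $[L^\bullet,L^\bullet]\hookrightarrow L^\bullet$ is zero in cohomology, and the reduction step gives that $L^\bullet$ is homotopy abelian. A more structural packaging of the same mechanism is to form the homotopy-fibre dgla $C^\bullet$ of $H^\bullet\hookrightarrow M^\bullet$ via polynomial de Rham models, lift $(\mathbf{l},\mathbf{i})$ to an $L_\infty$-morphism $L^\bullet\to C^\bullet$ (Fiorenza--Manetti's $L_\infty$-liftings of Cartan homotopies), note that (2) makes $C^\bullet\to(M^\bullet/H^\bullet)[-1]$ a quasi-isomorphism so that this morphism is injective in cohomology by (3), and deduce homotopy abelianity of $L^\bullet$ from that of $C^\bullet$.

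The step I expect to be the real obstacle is the ``trading'' at the start of the third paragraph: for a \emph{general} cocycle $w$ in the derived ideal, whose summands $[a_i,b_i]$ need involve no cocycles, one must still produce an explicit primitive for $\mathbf{i}_w$ modulo $H^\bullet$, and this is where all three hypotheses are genuinely intertwined. The characteristic-$0$ assumption is used in the reduction step (existence of the transferred minimal $L_\infty$-model and the equivalence ``abelian minimal model $\Leftrightarrow$ homotopy abelian'') and, in the structural packaging, in the polynomial de Rham models of homotopy fibres; properness plays no role at this abstract level, entering only when (1)--(3) are verified geometrically via degeneration of the Hodge--de Rham spectral sequence.
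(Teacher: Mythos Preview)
The paper does not give its own proof of this statement: Theorem~\ref{aBTT} is quoted verbatim from \cite[Theorem~3.3]{AbstractBTT2017} and used as a black box in the proof of Theorem~\ref{hom-ab-naive}. So there is no in-paper argument to compare your proposal against; what you have written is an attempt to reconstruct Iacono's proof, and in outline it points to the correct ingredients (the Cartan identities, the null-homotopy of $\mathbf{l}$ via $\mathbf{i}$, Fiorenza--Manetti's $L_\infty$-lift to the homotopy fibre, and the ``injective-in-cohomology into homotopy abelian implies homotopy abelian'' principle).

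Neither route, however, is complete as you have written it. In route~(a) you correctly flag the obstacle yourself: the identity $\mathbf{i}_{[a,b]}=[\mathbf{i}_a,\mathbf{l}_b]$ applied termwise to $w=\sum_i[a_i,b_i]$ does not by itself exhibit $\overline{\mathbf{i}}_w$ as exact in $M^\bullet/H^\bullet$, since the $b_i$ need not be cocycles and hence $\mathbf{l}_{b_i}$ need not be exact in $H^\bullet$; hypothesis~(2) only controls $\mathbf{l}$ on cocycles. In route~(b) the unjustified step is the final clause ``deduce homotopy abelianity of $L^\bullet$ from that of $C^\bullet$'': the homotopy fibre of a sub-dgla inclusion is \emph{not} homotopy abelian in general, and condition~(2) yields only a quasi-isomorphism of \emph{complexes} $C^\bullet\simeq(M^\bullet/H^\bullet)[-1]$, not of $L_\infty$-algebras. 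Iacono's argument does pass through the homotopy fibre and the Fiorenza--Manetti lift, but the conclusion is drawn from the specific shape of the resulting $L_\infty$-morphism---governed by the Cartan condition $[\mathbf{i}_a,\mathbf{i}_b]=0$---rather than from an abstract homotopy-abelianity of $C^\bullet$. Filling either gap amounts to reproducing the substance of \cite{AbstractBTT2017}.
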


Now we move to the logarithmic setting. Recall that the restricted deformation functor $\LD{X_0/S_0} \vert_{\art{k}}$ is controlled by the $k$-linear dgla $L^\bullet_{X_0/S_0} \otimes_{k \pow{Q}} k$. We prove:

\begin{thm}\label{hom-ab-naive}
	In the setting of Theorem~\ref{log-BTT}, if $\omega_{X_0/S_0}$ is the trivial line bundle, then the $k$-linear dgla $L^\bullet_{X_0/S_0} \otimes_{k \pow{Q}} k$ is homotopy abelian. In particular, the restricted functor $\LD{X_0/S_0} \vert_{\art{k}}$ is smooth.
\end{thm}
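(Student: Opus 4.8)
The plan is to apply the Abstract BTT theorem (Theorem~\ref{aBTT}) to a suitable triple $(L^\bullet, M^\bullet, H^\bullet)$ built from the geometry of $f_0 \colon X_0 \to S_0$, following the template of Iacono--Manetti in the classical (non-logarithmic) case and adapting it to log schemes. The natural candidates, by analogy with the absolute case, are: $L^\bullet = L^\bullet_{X_0/S_0} \otimes_{k\pow{Q}} k$ itself (a $k$-linear dgla resolving the log polyvector fields $\bigoplus_i \bigwedge^i T_{X_0/S_0}[-i]$, equipped with Schouten--Nijenhuis bracket and a differential twisted by $\ell$); $M^\bullet$ the dgla (really a module with a Lie-type structure) resolving the log de Rham complex, or rather the dgla $\mathrm{Hom}^\bullet$ governing it, obtained by a Dolbeault/Čech-type resolution of $\bigoplus_p \Omega^p_{X_0/S_0}[-p]$ with the log differential; and $H^\bullet$ a sub-dgla whose cohomology computes the logarithmic Hodge cohomology $\bigoplus_{p,q} H^q(X_0, \Omega^p_{X_0/S_0})$. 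The Cartan homotopy $\mathbf{i}$ is the contraction of a polyvector field against a differential form; its Lie derivative $\mathbf{l}$ is the usual Lie derivative, and the Cartan formula $\mathbf{l}_a = [d_M, \mathbf{i}_a] \pm \mathbf{i}_{d_L a}$ in this log context is exactly what makes $\mathbf{i}$ a Cartan homotopy.

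The key steps, in order, are: (i) set up the explicit dglas $L^\bullet$, $M^\bullet$ and the sub-dgla $H^\bullet$ using a log-smooth analogue of the Thom--Whitney / Čech resolution so that $H^\bullet(L^\bullet)$, $H^\bullet(M^\bullet)$, $H^\bullet(H^\bullet)$ compute, respectively, log polyvector cohomology, log de Rham cohomology, and log Hodge cohomology of $X_0/S_0$; (ii) verify that $\mathbf{i}$ is a Cartan homotopy and that $\mathbf{l}_a \in H^\bullet$ for all $a$, which is condition~(1) of Theorem~\ref{aBTT}; (iii) establish condition~(2), injectivity of $H^\bullet(H^\bullet) \to H^\bullet(M^\bullet)$, which amounts to the \emph{degeneration of the logarithmic Hodge--de Rham spectral sequence} at $E_1$ for the proper log smooth saturated morphism $f_0 \colon X_0 \to S_0$ over a point of characteristic $0$ --- here I would invoke the log Hodge theory of Kato--Nakayama / Illusie--Kato--Nakayama (or a Deligne--Illusie style argument); (iv) use the triviality of $\omega_{X_0/S_0} = \Omega^d_{X_0/S_0}$ to produce a global volume form trivializing the log canonical bundle, which gives an isomorphism $\bigwedge^i T_{X_0/S_0} \cong \Omega^{d-i}_{X_0/S_0}$ (contraction with the volume form), hence identifies $L^\bullet$ with a shift of a summand of $M^\bullet$; this identification is what makes $\mathbf{i}$ (composed with the quotient $M^\bullet \to M^\bullet/H^\bullet[-1]$) injective in cohomology, giving condition~(3). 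Then Theorem~\ref{aBTT} yields that $L^\bullet_{X_0/S_0}\otimes_{k\pow{Q}} k$ is homotopy abelian, and the ``in particular'' follows since the deformation functor of a homotopy abelian dgla is smooth, together with the fact (recalled in \S\ref{sec:deformation_functors} and the discussion preceding the theorem) that $\LD{X_0/S_0}\vert_{\art{k}}$ is controlled by this dgla.

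The main obstacle I expect is step~(iii): proving (or correctly citing) the $E_1$-degeneration of the Hodge--de Rham spectral sequence in the \emph{logarithmic} setting for a morphism to the log point $S_0 = \Spec(Q \to k)$ with possibly large monoid $Q$. In the classical BTT proof this degeneration is the one genuinely non-formal input, and the subtlety here is that the log structure on the base is non-trivial, so one needs the relative log Hodge theory of Kato--Nakayama (or Illusie--Kato--Nakayama) applied to proper log smooth \emph{saturated} morphisms; the saturatedness hypothesis is presumably what guarantees the relevant coherence/flatness and the degeneration. A secondary technical point is making the resolutions $L^\bullet$, $M^\bullet$ genuinely compatible with the bracket and with $\mathbf{i}$ at the cochain level (not merely in cohomology), for which a Thom--Whitney-type construction over an affine log-smooth cover --- as developed in \cite{Felten2020} and \cite{cosimplicial_dglas_in_def_theory} --- is the right tool; I would rely on that machinery rather than reconstruct it. Everything else (the Schouten bracket identities, the Cartan formula, the volume-form contraction isomorphism) is the standard BTT bookkeeping, carried out over $k$ after the base change $-\otimes_{k\pow{Q}} k$.
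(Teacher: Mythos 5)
Your proposal follows essentially the same route as the paper's proof: Thom--Whitney resolutions over an affine cover, the contraction of log derivations against log forms as the Cartan homotopy, Iacono's abstract BTT theorem, $E_1$-degeneration of the log Hodge--de Rham spectral sequence (the paper cites Illusie and \cite{FFR2019}) as the one non-formal input, and the volume form $\Theta^1_{X_0/S_0}\cong\Omega^{d-1}_{X_0/S_0}$ for the last injectivity condition. The only imprecisions, none of which change the strategy, are that $L^\bullet_{X_0/S_0}\otimes_{k\pow{Q}}k$ resolves only the log derivations $\Theta^1_{X_0/S_0}$ rather than the full polyvector field algebra, that the paper's $H^\bullet$ is the stabilizer $\{m\in M^\bullet \mid m(\mathrm{T}\Omega^d)\subset \mathrm{T}\Omega^d\}$ of the top Hodge piece rather than a subcomplex computing Hodge cohomology, and that the degeneration is needed not only for condition~(2) but also for condition~(3), via the injectivity in cohomology of $\mathrm{T}\Omega^{d-1}\to \mathrm{T}\Omega^\bullet/\mathrm{T}\Omega^d$.
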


In the remainder of this section, we explain the proof of Theorem~\ref{hom-ab-naive}.

The construction of the $k \pow{Q}$-pdgla $L^\bullet_{X_0/S_0}$ in \cite{Felten2020} relies on the Thom--Whitney resolution; we briefly recall its basic properties. For a more complete treatment, cf.~\cites{AznarHodgeDeligne1987,AlgebraicBTT2010,Felten2020}. Given a sheaf $\F$ of $k$-vector spaces on $X_0$ and an affine Zariski open cover $\U = \{U_i\}$ of $X_0$, we obtain the semicosimplicial \v Cech resolution $\F(\U)$, which is a semicosimplicial sheaf on $X_0$. Similarly, when $\G^\bullet$ is a complex of sheaves, we obtain a semicosimplicial complex of sheaves $\G^\bullet(\U)$. Now the Thom--Whitney resolution $\TW^{\bullet,\bullet}(\G^\bullet(\U))$ is a double complex---in the sense that $d_1d_2 + d_2d_1 = 0$---such that $\G^p \to \TW^{p,\bullet}(\G^\bullet(\U))$ is a resolution of the sheaf $\G^p$. If $\G^p$ is quasi-coherent, then the resolution is acyclic so that it computes the cohomology $H^q(X_0,\G^p)$. The Thom--Whitney resolution is more subtle than the usual \v{C}ech complex $\check \C^\bullet(\U;\G^\bullet)$, which can be obtained by just taking alternating sums of boundary maps in $\G^\bullet(\U)$; this additional complexity allows to extend algebraic structures (like a Lie bracket or a product) from $\G^\bullet$ to the actual complex $\TW^{\bullet,\bullet}(\G^\bullet(\U))$, not only to its cohomology $H^\bullet(X_0,\G^\bullet)$---cf.~the construction of the cup product in cohomology. In the notation $\TW^{p,q}(-)$ of \cite{Felten2020}, there is a switch of indices relative to the standard notation---i.e., $\TW^{p,q}(-) := C_\TW^{q,p}(-)$. This does not affect the formation of the total complex $\mathrm{Tot}_\TW(-)$. The Thom--Whitney resolution does not naively commute with the shift functor; instead, we have an isomorphism
\[
s: \mathrm{Tot}_\TW(\G^\bullet(\U))[m] \xrightarrow{\cong} \mathrm{Tot}_\TW(\G^\bullet[m](\U))
\]
which is given on $\TW^{p,q}(-)$ by multiplication with $(-1)^{qm}$. Finally, note that our construction of the Thom--Whitney resolution follows the standard but differs from \cite{AlgebraicBTT2010} by the order of the tensor factors; this makes a difference in some signs.

In \cite{Felten2020}, the Thom--Whitney resolution $\mathrm{PV}_{X_0/S_0} := \TW^{\bullet,\bullet}(G^\bullet_{X_0/S_0}(\U))$ of the Gerstenhaber algebra $G^\bullet_{X_0/S_0}$ of polyvector fields is considered; it gives rise to a Thom--Whitney resolution $G^{-1}_{X_0/S_0} \to \mathrm{PV}^{-1,\bullet}_{X_0/S_0}$ of log derivations. It is clear from the construction in \cite{Felten2020} that 
\[
L^\bullet_{X_0/S_0} \otimes_{k \pow{Q}} k \cong \Gamma(X_0,\mathrm{PV}^{-1,\bullet}_{X_0/S_0}) = \Gamma(X_0,\mathrm{Tot}_\TW(\Theta^1_{X_0/S_0}(\U))),
\]
where $\Theta^1_{X_0/S_0}$ is considered as a complex concentrated in degree $0$.

\begin{proof}[{Proof of Theorem~\ref{hom-ab-naive}}]
	In preparation for applying Theorem~\ref{aBTT} in our situation, we apply the semicosimplicial \v Cech resolution to the de Rham complex $\Omega^\bullet_{X_0/S_0}$; this yields the semicosimplicial \v Cech complex $\Omega^\bullet_{X_0/S_0}(\U)$. We denote the singly graded total complex of the Thom--Whitney resolution by $\mathrm{T}\Omega^\bullet := \Gamma(X_0, \mathrm{Tot}_\TW(\Omega^\bullet_{X_0/S_0}(\U)))$. Similarly, we write $\mathrm{T}\Omega^i := \Gamma(X_0,\mathrm{Tot}_\TW(\Omega^i_{X_0/S_0}[-i](\U)))$ for the resolutions of the individual sheaves of differential forms. Then we have injective graded maps $\mathrm{T}\Omega^i \to \mathrm{T}\Omega^\bullet$ because the Thom--Whitney construction is exact. Namely, $\mathrm{Tot}_\mathrm{TW}(-(\U))$ transforms exact sequences of sheaves into exact sequences of sheaves by the construction in \cite[5.3]{Felten2020} and the exactness result in \cite[2.4]{AznarHodgeDeligne1987}; then $\Gamma(X_0,-)$ is only left exact, but $\mathrm{Tot}_\mathrm{TW}(\F(\U))$ is acyclic at least whenever $\F$ is quasi-coherent by \cite[5.6]{Felten2020}.
	However, only for $i = d$, the graded linear map $\mathrm{T}\Omega^d \to \mathrm{T}\Omega^\bullet$ is a homomorphism of complexes since only $\Omega^d_{X_0/S_0}[-d] \to \Omega^\bullet_{X_0/S_0}$ is compatible with the differential. Here, $d$ is the relative dimension of $f_0: X_0 \to S_0$. Moreover, the composed map $\mathrm{T}\Omega^{d-1} \to \mathrm{T}\Omega^\bullet/\mathrm{T}\Omega^d$ is a homomorphism of complexes since $\mathrm{T}\Omega^\bullet/\mathrm{T}\Omega^d$ is the resolution of $\Omega^\bullet_{X_0/S_0}/\Omega^d_{X_0/S_0}[-d]$ and $\Omega^{d-1}_{X_0/S_0} \to \Omega^\bullet_{X_0/S_0}/\Omega^d_{X_0/S_0}[-d]$ is a homomorphism of complexes. The two homomorphisms of complexes are injective in cohomology by the argument in \cite[Thm.~6.4]{AlgebraicBTT2010} and the fact that the Hodge--de Rham spectral sequence $H^q(X_0,\Omega^p_{X_0/S_0}) \Rightarrow \HH^{p + q}(X_0, \Omega^\bullet_{X_0/S_0})$ degenerates at $E_1$---this is proven in \cite{Illusie2005} as well as a special case of \cite[Thm.~1.9]{FFR2019}. 
	
	We now set $L^\bullet := L^\bullet_{X_0/S_0} \otimes_{k \pow{Q}} k$ and $M^\bullet := \mathrm{Hom}_k(\mathrm{T}\Omega^\bullet,\mathrm{T}\Omega^\bullet)$; the latter space is a dgla by the construction in \cite[Ex.~2.2]{AbstractBTT2017}. Using the K\"unneth formula, we compute its cohomology via the natural isomorphism
	\[
	H^\bullet(M^\bullet) \xrightarrow{\cong} \mathrm{Hom}_k^\bullet(H^\bullet(\mathrm{T}\Omega^\bullet), H^\bullet(\mathrm{T}\Omega^\bullet)).
	\]
	A \emph{contraction} $\invneg\ : L^\bullet \times \mathrm{T}\Omega^\bullet \to \mathrm{T}\Omega^\bullet$ is (by definition) a bilinear map of degree $-1$ such that the induced map $L^\bullet \to M^\bullet = \mathrm{Hom}_k(\mathrm{T}\Omega^\bullet,\mathrm{T}\Omega^\bullet)$ is a Cartan homotopy. We construct such a contraction---and thus a Cartan homotopy---from a \emph{semicosimplicial contraction}, i.e., a system of contractions $\invneg_n : \Theta^1_{X_0/S_0}(\U)_n \times \Omega^\bullet_{X_0/S_0}(\U)_n \to \Omega^\bullet_{X_0/S_0}(\U)_n$ which is compatible with the coface maps of the semicosimplicial complex---cf.~\cite{AbstractBTT2017}. Concretely, this contraction is given by contracting log derivations with differential forms---i.e., by the unique map $\invneg\ : \Theta^1_{X_0/S_0} \times \Omega^p_{X_0/S_0} \to \Omega^{p - 1}_{X_0/S_0}$ which satisfies $\theta \ \invneg\ \omega = \langle\omega,\theta\rangle$ for 
	$\omega \in \Omega^1_{X/S}$ and
	\[
	\theta \ \invneg \ (\omega \wedge \eta) = (\theta \ \invneg\ \omega) \wedge \eta 
	+ (-1)^{|\omega|}\omega \wedge (\theta \ \invneg\ \eta).
	\]
	We set 
	\[
	H^\bullet := \{m \in M^\bullet \ | \ m(\mathrm{T}\Omega^d) \subset \mathrm{T}\Omega^d\} \subset M^\bullet;
	\]
	it is a sub-dgla whose embedding is injective in cohomology by \cite[Ex.~2.17]{AbstractBTT2017} because $\mathrm{T}\Omega^d \subset \mathrm{T}\Omega^\bullet$ is a sub-dg-vector space whose embedding is injective in cohomology.
	
	We show the condition $\mathbf{l}_\theta \in H$ by explicit computation. We have decompositions
	\[
	L^\bullet = \bigoplus_\lambda \Gamma(X_0,\TW^{0,\lambda}(\Theta^1_{X_0/S_0}(\U)))
	\quad \text{and} \quad 
	\mathrm{T}\Omega^d = \bigoplus_\mu \Gamma(X_0,\TW^{d,\mu - d}(\Omega^\bullet_{X_0/S_0}(\U))),
	\]
	so let $\theta = (t_n \otimes \theta_n)_n \in L^\bullet$ be of bidegree $(0,\lambda)$ 
	and $\omega = (a_n \otimes \omega_n)_n \in \mathrm{T}\Omega^d$ be of bidegree $(d,\mu-d)$. 
	Then
	\begin{align}
	\mathbf{l}_\theta(\omega) &= d\mathbf{i}_\theta(\omega) 
	+ (-1)^\lambda \mathbf{i}_\theta(d\omega) + \mathbf{i}_{d\theta}(\omega) \nonumber \\
	&= \big((t_n \wedge a_n) \otimes d(\theta_n \ \invneg \ \omega_n)\big)_n 
	\in \Gamma(X_0,\TW^{d,\mu - d + \lambda}(\Omega^\bullet_{X_0/S_0}(\U))).\nonumber
	\end{align}
	
	To prove the final condition, let $\eta \in \Omega^d_{X_0/S_0}$ be a volume form. In the diagram 
	\[
	\xymatrix{
		L^\bullet \ar[r]^-{\alpha} \ar[d]^-\gamma & \mathrm{Hom}_k^\bullet(\mathrm{T}\Omega^d, \mathrm{T}\Omega^{d - 1})[-1] 
		\ar[d]^{\mathrm{ev}_\eta} \\
		\mathrm{Tot}_\mathrm{TW}(\Omega^{d - 1}_{X_0/S_0}(\U)) & \mathrm{T}\Omega^{d - 1}[d - 1] \ar[l]_-s 
	}
	\]
	the map $\alpha$ induced by contraction is a homomorphism of complexes:
	In a computation analogous 
	to that of $\mathbf{l}_\theta(\omega)$, the differentials $d(\theta_n \ \invneg \ \omega_n)$ vanish, for $\Omega^{d - 1}_{X_0/S_0}[-d + 1]$ is concentrated in a single degree. The 
	evaluation $\mathrm{ev}_\Omega$ maps a morphism $\phi: \mathrm{T}\Omega^d \to \mathrm{T}\Omega^{d - 1}$ to 
	the image $(-1)^{d|\phi|}\phi(1 \otimes \Omega)$ where 
	$1 \otimes \Omega \in \mathrm{T}\Omega^d$ is the element induced by $\Omega$ in degree $d$ and 
	$|\phi|$ is the 
	degree of the original map (without shift in the Hom complex).
	It is a homomorphism of complexes because $d\Omega = 0$. The map $s$ 
	is the above mentioned comparison map for the two shifted versions of the Thom--Whitney resolution. When we evaluate the contraction at $\Omega$, this gives an isomorphism $\Theta^1_{X_0/S_0} \cong \Omega^{d - 1}_{X_0/S_0}$, and $\gamma$ is the induced 
	map of Thom--Whitney complexes. The diagram is commutative, so $\alpha$ is 
	injective in cohomology (as $\gamma$ is an isomorphism). The homomorphism $\alpha$ fits into a diagram
	\[
	\xymatrix{
		L^\bullet \ar[r] \ar[d]_-\alpha & M^\bullet/H^\bullet[-1] \ar[d] \\
		\mathrm{Hom}_k^\bullet(\mathrm{T}\Omega^d, \mathrm{T}\Omega^{d-1})[-1] \ar[r]^-\eta & \mathrm{Hom}_k^\bullet(\mathrm{T}\Omega^d, \mathrm{T}\Omega^\bullet/\mathrm{T}\Omega^d)[-1]; \\
	}
	\]
	because $\mathrm{T}\Omega^{d - 1} \to \mathrm{T} \Omega^\bullet/\mathrm{T}\Omega^d$ is injective in cohomology, $\eta$ is 
	injective in cohomology by the K\"unneth theorem. Thus, $L^\bullet \to M^\bullet/H^\bullet[-1]$ is injective in cohomology.
\end{proof}

\begin{rem}
	Theorem \ref{hom-ab-naive} remains true if we relax the Calabi--Yau condition in the sense that we only require that $\omega_{X_0/S_0}^{\otimes N}$ is trivial  for some $N > 0$. In this case, the $N$-cyclic covering
	\[
	\pi: Y_0 := X_0[\!\!\sqrt[N]{\omega_{X_0/S_0}}] \to X_0
	\]
    is a finite \'etale covering; we endow $Y_0$ with the induced log structure from $X_0$, thus we turn $Y_0 \to S_0$ into a proper and saturated log smooth morphism with $\omega_{Y_0/S_0} \cong \cO_{Y_0}$. The canonical map $\Theta^1_{X_0/S_0} \to \pi_*\Theta^1_{Y_0/S_0}$ induces a homomorphism
	\[
	L_{X_0/S_0}^\bullet \otimes_{k \pow{Q}} k \to L^\bullet_{Y_0/S_0} \otimes_{k\pow{Q}} k
	\]
	of dglas, which is injective in cohomology (because $H^\bullet(X_0,\Theta^1_{X_0/S_0}) \to H^\bullet(Y_0,\Theta^1_{Y_0/S_0})$ is injective). Thus, $L_{X_0/S_0}^\bullet \otimes_{k \pow{Q}} k$ is homotopy abelian by \cite[2.11]{AbstractBTT2017}.
	
	We do not know if Theorem~\ref{log-BTT} remains true in this situation because it is unclear if Theorem~\ref{CLM-lifting} below holds. The latter relies on the existence of a volume form, which is used to transport the de Rham differential to the polyvector fields and thus construct the Batalin--Vilkovisky operator; such a volume form is not given in the situation where the log canonical bundle is only a torsion line bundle but not trivial. 
\end{rem}
\begin{rem}
 In case $Q \not= 0$, the smoothness of $\LD{X_0/S_0} \vert_{\art{k}}$ follows from Theorem~\ref{CLM-lifting} below as well. This second proof does not generalize to the torsion case discussed in the Remark above.
\end{rem}
\begin{rem}
 Theorem~\ref{hom-ab-naive} holds as well for log toroidal families $f_0: X_0 \to S_0$ in the sense of \cite{FFR2019}; we do not need to construct a version of $L^\bullet_{X_0/S_0}$ in this case---the correct dgla is obtained by the Thom--Whitney resolution of the reflexive sheaf $\Theta^1_{X_0/S_0}$. This dgla then controls the functor of locally trivial log deformations---a notion which makes sense on the subcategory $\art{k} \subseteq \art{k\pow{Q}}$ but not on the full category $\art{k\pow{Q}}$. Theorem~\ref{hom-ab-naive} holds because the Hodge--de Rham spectral sequence of the reflexive de Rham complex $W^\bullet_{X_0/S_0}$ degenerates at $E_1$. As in the log smooth case, it suffices to assume that $\omega_{X_0/S_0}$ is a torsion line bundle; in fact, the cyclic covering $X_0[\!\!\sqrt[N]{\omega_{X_0/S_0}}] \to S_0$  of a log toroidal family is a log toroidal family as well. 
\end{rem}


\subsection{Proof of Theorem~\ref{log-BTT}}
 \label{sec:proof}
 
 We start by recalling a fundamental result about smoothings of log Calabi--Yau spaces:
 
 \begin{thm}[{Chan--Leung--Ma~\cite{ChanLeungMa2019}, Felten--Filip--Ruddat~\cite{FFR2019}, Felten~\cite{Felten2020}}] \label{CLM-lifting}
 	Let $k$, $Q$, and $X_0 \to S_0$ be as in Theorem~\ref{log-BTT}. 
 	Denote by $\frakm$ the maximal ideal of $k \pow{Q}$.
 	If $\omega_{X_0/S_0}$ is the trivial line bundle, then the function
 	\[
 	\LD{X_0 / S_0} (k \pow{Q} / \frakm^{\ell + 1}) \longrightarrow \LD{X_0 / S_0} (k \pow{Q} / \frakm^{\ell})
 	\] 
 	is surjective for every integer $\ell \geq 1$.
 \end{thm}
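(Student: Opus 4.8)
The plan is to recognize this statement as the translation, into the algebraic language of the predifferential graded Lie algebra $L^\bullet_{X_0/S_0}$ of \cite{Felten2020}, of the order-by-order smoothing results of Chan--Leung--Ma \cite{ChanLeungMa2019} and Felten--Filip--Ruddat \cite{FFR2019}. If $Q = 0$ there is nothing to prove, since then $k\pow{Q} = k$ and $\frakm = 0$, so both source and target are $\LD{X_0/S_0}(k)$, a singleton; assume therefore $Q \neq 0$. Recall that $\LD{X_0/S_0}$ is isomorphic, via a modified Maurer--Cartan equation, to the deformation functor of the $k\pow{Q}$-linear pdgla $(L^\bullet_{X_0/S_0}, [-,-], d, \ell)$, which arises as the global sections of the Thom--Whitney resolution of the sheaf of log polyvector fields. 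Writing $A_\ell := k\pow{Q}/\frakm^\ell$, it suffices to show that every solution of the twisted Maurer--Cartan equation over $A_\ell$ lifts to one over $A_{\ell+1}$; the claimed surjectivity then follows by passing to gauge-equivalence classes. Note that $\ker(A_{\ell+1} \to A_\ell) = \frakm^\ell/\frakm^{\ell+1}$ is a finite-dimensional $k$-vector space, so this is a one-step infinitesimal extension problem.

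First I would introduce the Batalin--Vilkovisky structure supplied by the volume form. A trivialization $\eta$ of $\omega_{X_0/S_0}$ induces, by contraction, an isomorphism between the log polyvector fields and the log de Rham complex $\Omega^\bullet_{X_0/S_0}$; transporting the de Rham differential through this isomorphism yields a BV operator $\Delta$ on (the Thom--Whitney resolution of) the polyvector fields whose associated bracket is the Schouten--Nijenhuis bracket $[-,-]$. Thus $L^\bullet_{X_0/S_0}$ underlies (the Thom--Whitney resolution of) a sheaf of differential Gerstenhaber--Batalin--Vilkovisky algebras, twisted by the class $\ell$ with $d^2 = [\ell,-]$ --- which is exactly the algebraic input required by the constructions of \cite{ChanLeungMa2019, FFR2019}.

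The crucial cohomological input is the degeneration at $E_1$ of the Hodge--de Rham spectral sequence $H^q(X_0, \Omega^p_{X_0/S_0}) \Rightarrow \HH^{p+q}(X_0, \Omega^\bullet_{X_0/S_0})$, established in \cite{Illusie2005} and, as a special case, in \cite[Thm.~1.9]{FFR2019}. Through the BV isomorphism above, this degeneration provides the homotopy (a $\partial\bar\partial$-type lemma) that splits $\Delta$ off its image, and hence lets one solve the Maurer--Cartan equation one order at a time: given a Maurer--Cartan element $\phi_\ell$ over $A_\ell$, the obstruction to extending it to $A_{\ell+1}$ is a cocycle with values in $\frakm^\ell/\frakm^{\ell+1}$, which the homotopy furnished by the degeneration allows one to cancel by a correction term in degree $1$, producing the desired $\phi_{\ell+1}$ reducing to $\phi_\ell$. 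This is precisely the inductive step carried out in \cite{ChanLeungMa2019} and \cite{FFR2019}. Combined with the identification of $\LD{X_0/S_0}(A_\ell)$ with the set of Maurer--Cartan classes over $A_\ell$ from \cite{Felten2020}, this gives the stated surjectivity.

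The main obstacle is bookkeeping rather than mathematical: the smoothing theorems of \cite{ChanLeungMa2019, FFR2019} are phrased for (log toroidal, resp.\ divisorial) families, sometimes over the standard log point $\Spec(\NN \to k)$ and occasionally under auxiliary cohomological hypotheses, whereas here $f_0: X_0 \to S_0$ is an arbitrary proper log smooth saturated morphism over $\Spec(Q \to k)$ with $Q$ a non-zero sharp toric monoid. One has to check that the generalization of the base monoid from $\NN$ to $Q$ is harmless --- it is, because only the $\frakm$-adic filtration, not any $Q$-grading, enters the one-step lifting --- and that the pdgla $L^\bullet_{X_0/S_0}$ of \cite{Felten2020} is literally the object to which the BV/Maurer--Cartan machinery of the other two papers applies. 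Both points are addressed in \cite{Felten2020}, so the proof ultimately consists of citing \cite{Felten2020} for the identification and \cite{ChanLeungMa2019, FFR2019} for the inductive lifting; no new argument is required.
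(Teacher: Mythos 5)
Your proposal follows essentially the same route as the paper's own proof: both reduce the statement to lifting Maurer--Cartan solutions of the pdgla $L^\bullet_{X_0/S_0}$ of \cite{Felten2020} order by order along $k\pow{Q}/\frakm^{\ell+1} \to k\pow{Q}/\frakm^\ell$, and both identify the Batalin--Vilkovisky structure coming from the volume form together with the $E_1$-degeneration of the Hodge--de Rham spectral sequence as the inputs that make the inductive step of \cite{ChanLeungMa2019, FFR2019} go through. The paper's proof is likewise a citation-level sketch (deferring the algebraization of the Chan--Leung--Ma method over a general sharp toric monoid $Q$ to a separate paper), so your treatment is, if anything, slightly more explicit about the mechanism.
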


 \begin{proof}
 	This follows from an algebraic version of the method in \cite{ChanLeungMa2019}, which is also employed in \cite{FFR2019}.
 	Given a Maurer--Cartan solution in $L_{X_0/S_0}^\bullet \otimes_{k \pow{Q}} \left(k \pow{Q} / \frakm^\ell\right)$, we must find a lifting in $L_{X_0/S_0}^\bullet \otimes_{k \pow{Q}}\left( k \pow{Q} / \frakm^{\ell + 1}\right)$.
 	Its existence follows from the analogue of \cite[Thm.~5.5]{ChanLeungMa2019} once we algebraize the theory in \cite{ChanLeungMa2019} in the spirit of \cite{Felten2020}.
 	The crucial ingredient here is the fact that the Hodge--de Rham spectral sequence
 	\[
 	H^q(X_0,\Omega^p_{X_0/S_0}) \Rightarrow \HH^{p + q}(X_0, \Omega^\bullet_{X_0/S_0})
 	\]
 degenerates at $E_1$---see \cite{Illusie2005} or \cite{FFR2019}---, and that $\omega_{X_0/S_0} \cong \cO_{X_0}$. The algebraization is rather straightforward; the first author will elaborate it in a separate paper, where he provides the technical machinery in more generality.
 \end{proof}

\begin{proof}[{Proof of Theorem~\ref{log-BTT}}]
If $Q = 0$, we use Theorem~\ref{hom-ab-naive}.
If $Q \neq 0$, we combine Theorem~\ref{CLM-lifting}, Lemma~\ref{lem:toric_rings_have_valuations} and Proposition~\ref{prop:lifting_functors_imply_smoothness}.
\end{proof}

\bibliography{log-BTT.bib}

\end{document}